\documentclass[a4paper]{amsart}
\usepackage{amssymb}
\usepackage[matrix,arrow,curve,tips]{xy}\SelectTips{cm}{}
\usepackage{mathrsfs}


\hyphenation{group-oid group-oids}


\newtheorem{dfn}{Definition}
\newtheorem{prop}[dfn]{Proposition}
\newtheorem{theo}[dfn]{Theorem}
\newtheorem{cor}[dfn]{Corollary}
\newtheorem{ex}[dfn]{Example}
\newtheorem{lem}[dfn]{Lemma}

\newcommand{\Rem}{\noindent {\it Remark. }}
\newcommand{\oo}{\,\mbox{-}\,}
\newcommand{\com}{\circ}

\newcommand{\cF}{\mathord{\mathcal{F}}}
\newcommand{\cO}{\mathord{\mathcal{O}}}
\newcommand{\cN}{\mathord{\mathcal{N}}}

\newcommand{\src}{\mathord{\mathrm{s}}}
\newcommand{\trg}{\mathord{\mathrm{t}}}

\newcommand{\id}{\mathord{\mathrm{id}}}

\newcommand{\Ker}{\mathord{\mathrm{Ker}}}

\newcommand{\I}{\mathord{\mathrm{I}}}
\newcommand{\K}{\mathord{\mathrm{K}}}
\newcommand{\Nor}{\mathord{\mathrm{N}}}
\newcommand{\Tan}{\mathord{\mathrm{T}}}
\newcommand{\Ad}{\mathord{\mathrm{Ad}}}
\newcommand{\GL}{\mathord{\mathrm{GL}}}
\newcommand{\Aut}{\mathord{\mathrm{Aut}}}
\newcommand{\Inn}{\mathord{\mathrm{Inn}}}
\newcommand{\Eff}{\mathord{\mathrm{Eff}}}

\newcommand{\W}{\mathord{\mathrm{W}}}
\newcommand{\U}{\mathord{\mathrm{U}}}
\newcommand{\Normalizer}{\mathord{\mathrm{N}}}
\newcommand{\Centralizer}{\mathord{\mathrm{C}}}

\newcommand{\GG}{\mathscr{G}}
\newcommand{\HH}{\mathscr{H}}
\newcommand{\KK}{\mathscr{K}}

\newcommand{\BB}{\mathscr{B}}

\newcommand{\Gpd}{\mathord{\mathsf{Gpd}}}
\newcommand{\GPD}{\mathord{\mathsf{GPD}}}
\newcommand{\hGPD}{\mathord{\mathsf{hGPD}}}

\begin{document}

\title[]%
      {Homotopy sequence of a topological groupoid with a basegroup
       and an obstruction to presentability of proper regular Lie groupoids}

\author{B. Jelenc}
\address{Institute of Mathematics, Physics and Mechanics,
         University of Ljubljana, Jadranska 19,
         1000 Ljubljana, Slovenia}
\email{blaz.jelenc@imfm.si}

\author{J. Mr\v{c}un}
\address{Department of Mathematics, University of Ljubljana,
         Jadranska 19, 1000 Ljubljana, Slovenia}
\email{janez.mrcun@fmf.uni-lj.si}

\thanks{This work was supported in part by
        the Slovenian Research Agency (ARRS) project J1-2247.}
\subjclass[2010]{22A22, 55Q05, 58H05}

\begin{abstract}
A topological groupoid $\GG$ is $K$-pointed,
if it is equipped with a homomorphism from a topological group
$K$ to $\GG$. We describe the homotopy groups of such
$K$-pointed topological groupoids and relate these groups to
the ordinary homotopy groups in terms of a long exact sequence.
As an application, we give an obstruction to presentability
of proper regular Lie groupoids.
\end{abstract}

\maketitle

Topological and Lie groupoids are geometric objects that can be used
to represent various singular geometric structures. Examples of such
structures include foliations, equivalence relations, group actions, 
orbifolds, etc. The topological groupoids associated to these geometric structures
are usually determined up to Morita equivalence.
Homotopy groups are examples of Morita invariants of topological groupoids.
They can be defined as the homotopy groups of the associated classifying space
(see \textit{e.g.}\ \cite{Haefliger,Moerdijk1995}).
An equivalent description of these groups
is given by the group of homotopy classes of principal $(\GG,a)$-bundles over
$(I^n,\partial I^n)$ \cite{Haefliger,Jelenc,Mrcun1996}. Here $(\GG,a)$ is a
pointed topological groupoid with basepoint $a$ in the space of objects
$\GG_0$ of $\GG$.
This approach has several advantages, for
instance, it enables us to describe the notion of a Serre fibration in the Morita
category of topological groupoids and derive the associated long exact
sequence of homotopy groups (\cite{Jelenc}, see also 
\cite{Noohi} for a study of fibrations in the context of stacks).

The description of homotopy groups in terms of principal bundles over
$(I^n,\partial I^n)$ also allows us to consider the homotopy groups
of a topological groupoid with a more general base
instead of a simple basepoint in its space of objects.
A homomorphism $\rho$ from a locally compact Hausdorff topological group
$K$ to a topological groupoid $\GG$ defines a {\em $K$-basegroup}
in $\GG$ at a point $a$ in $\GG_0$.
We say that such a groupoid with a $K$-basegroup is {\em $K$-pointed}
and denote it by the triple $(\GG,a,\rho)$.
The $K$-pointed topological groupoids form
a Morita category, where morphisms are given by the (isomorphism classes of)
principal bundles. Such a principal $(\GG,a,\rho)$-bundle over $(\HH,b,v)$
is a pointed space $(P,p)$ which is a principal $\GG$-bundle over $\HH$
(\cite{Haefliger1984,Mrcun1996,MoerdijkMrcun2005})
compatible with the action of the basegroups.
We can extend this Morita category further to include
{\em $K$-marked} topological groupoids, which are triples
$(\GG,A,\rho)$, where $A$ is a subset of $\GG_0$ and $\rho$ is a continuous choice
of a $K$-basegroup at each $a\in A$.
For instance, this Morita category of $K$-marked topological groupoids
includes $(K\times I^n,\partial I^n,\tau)$, where $\tau$ is the  inclusion.
We define the $n$-th homotopy group
of a $K$-pointed topological groupoid $(\GG,a,\rho)$ as the group of homotopy
classes of principal $(\GG,a,\rho)$-bundles over $(K\times I^n,\partial I^n,\tau)$,
and denote it by $\pi_n^K(\GG)=\pi_n^K(\GG,a,\rho)$. It is clear that this gives us
a more refined
version of homotopy groups, as the elements of $\pi_n^K(\GG)$ can be
seen as elements of the ordinary homotopy group
$\pi_n(\GG)=\pi_n(\GG,a)$ with an additional
left $K$-action.
Similar constructions are known in the context of orbifolds
\cite{Chen} and orbispaces \cite{HenriquesGepner}.

Next, we associate to every topological groupoid $\GG$
the groupoid $\GG^K$, which is the action groupoid associated
to the adjoint $\GG$-action 
on the space of all morphisms from $K$ to $\GG$. This groupoid
plays the role of the exponential in the Morita category of topological groupoids.
This exponential provides an alternative description of
the homotopy groups of a $K$-pointed topological groupoid $(\GG,a,\rho)$
as ordinary homotopy groups of the pointed groupoid $(\GG^K,\rho)$.
We also extend the notion of a Serre fibration to the notion of a {\em Serre
$K$-fibration}. We are especially interested in the canonical functor
$\omega:\GG^K\to\GG$ that gives the relation between homotopy groups of a
$K$-pointed groupoid $(\GG,a,\rho)$ and ordinary homotopy groups of $(\GG,a)$.
When $\omega$ is a Serre fibration, this relation is expressed in terms of a long
exact sequence
$$
\ldots\to\pi_n(\hom(K,\GG_a))\to\pi_n^K(\GG)\to\pi_n(\GG)
      \to\pi_{n-1}(\hom(K,\GG_a))\to\ldots\;,
$$
where $\hom(K,\GG_a)$ is the space of all homomorphisms from $K$ to the isotropy
group $\GG_a$ of $\GG$ at $a$.

In the last part of the paper,
we study the homotopy groups of $K$-pointed connected proper
regular Lie groupoids. Choosing the $K$-basegroup of such a groupoid to be given
by the Lie group $\K_a(\GG)$ of its ineffective arrows at $a\in\GG_0$,
the long exact sequence associated to the functor $\omega$
gives us the {\em monodromy map}
$$
\partial:\pi_1(\GG)\to\pi_0(\Aut(\K_a(\GG)))\;,
$$
which is a Morita invariant of $\GG$.
This map provides us with an obstruction
to presentability of connected proper regular Lie groupoids as global quotients
(see Theorem \ref{obstruction}).
We show that if such a groupoid $\GG$ is Morita equivalent
to the translation groupoid associated to an action of a connected
compact Lie group on a smooth manifold, then there exists
a faithful unitary representation of $\K_a(\GG)$ such that
the corresponding character is fixed under precomposition
with any automorphism representing a class in the image
of the monodromy map $\partial:\pi_1(\GG)\to\pi_0(\Aut(\K_a(\GG)))$.

\section{Morita category of group-marked topological groupoids}

\subsection{Topological groupoids}
To fix the notations, let us first recall some
basic definitions and facts (for details and examples, see \textit{e.g.}\
\cite{Mackenzie,MoerdijkMrcun2003,MoerdijkMrcun2005}). 
A {\em groupoid} is a small category in which
all arrows are invertible. In particular, a groupoid $\GG$
has its set of {\em objects} $\GG_0$ and its set of {\em arrows}
$\GG_1$. Each arrow $g$ of $\GG$ has its {\em source} object $\src(g)$ and 
its {\em target} object $\trg(g)$.
For two objects $x,x'\in\GG_0$ we denote by
$\GG(x,x')=\src^{-1}(x)\cap\trg^{-1}(x')$ the set of arrows
{\em from} $x$ {\em to} $x'$ in $\GG$.
The {\em product} arrow $gg'\in\GG_1$ of two arrows
$g,g'\in\GG_1$ is defined if $\src(g)=\trg(g')$, and in this case
$\src(gg')=\src(g')$ and  $\trg(gg')=\trg(g)$.
Each object $x$ has its {\em unit} arrow $1_x\in\GG(x,x)$,
and each arrow $g$ has its {\em inverse} arrow $g^{-1}$.
A groupoid $\GG$ is sometimes denoted by $(\GG_1\rightrightarrows\GG_0)$.
The set $\GG(x,x)$ is a group, called the {\em isotropy group}
of $\GG$ at the object $x$, and denoted also by $\GG_x$ or $\I_x(\GG)$.

A {\em topological groupoid} is a groupoid $\GG$ with topology
on $\GG_0$ and on $\GG_1$ such that all the structure maps
mentioned above are continuous. In particular, the isotropy
groups of a topological groupoid are topological groups.
A {\em homomorphism} of topological groupoids is a continuous functor
between them. In this way we get the category of topological
groupoids, which we denote by $\Gpd$.

First examples of topological groupoids are topological spaces
and topological groups:
any topological space $X$ can be viewed as the (unit) topological groupoid
$(X\rightrightarrows X)$, and any topological group $G$ is a topological groupoid
with only one object $(G\rightrightarrows \ast)$. The category
of topological spaces and the category of topological groups are both
full subcategories of $\Gpd$.

\subsection{Actions of topological groupoids}
Recall that a right action of a topological groupoid $\GG$
on a topological space $X$ along a (continuous) map $\epsilon:X\to \GG_0$
is a map $\mu:X\times_{\GG_0}\GG_1\to X$, $(x,g)\mapsto xg$,
which is defined for each pair
$(x,g)\in X\times\GG_1$ with $\epsilon(x)=\trg(g)$ and satisfies the usual
properties of an action.
For such a right $\GG$-action we write
$x\GG=\mu({x},\trg^{-1}(x))$ for the {\em orbit} through $x$ and
$X/\GG$ for the associated {\em space of orbits}.
We have the associated {\em translation} groupoid
$X\rtimes\GG=(X\times_{\GG_0}\GG_1 \rightrightarrows X)$,
which is the topological groupoid in which
the source map is the action $\mu$, the target map is the first projection and
the product is given by $(x,g)(x',g')=(x,gg')$.
Analogously, one defines a left action of a topological groupoid
$\HH$ on $X$ and the associated translation groupoid $\HH\ltimes X$.

\subsection{Morita category of topological groupoids}
Let $\GG$ and $\HH$ be topological groupoids.
A {\em principal $\GG$-bundle} over $\HH$ is a space $P$, equipped with a left
$\HH$-action along $\pi:P\to \HH_0$ and
a right $\GG$-action along $\epsilon:P\to \GG_0$
such that
\begin{enumerate}
\item [(i)]   the map $\pi$ has local sections (\textit{i.e.}\ for any
              $y\in\HH_0$ there exist a section of $\pi$, defined on
              an open neighbourhood of $y$),
\item [(ii)]  for every $h\in\HH_1$, $p\in P$ and $g\in\GG_1$ with
              $s(h)=\pi(p)$ and $\epsilon(p)=t(g)$ we have
              $\pi(pg)=\pi(p)$, $\epsilon(hp)=\epsilon(p)$ and
              $h(pg)=(hp)g$, and
\item [(iii)] the map $P\times_{\GG_0}\GG_1\to P\times_{\HH_0} P$, $(p,g)\mapsto(p,pg)$,
              is a homeomorphism.
\end{enumerate}
A {\em morphism} between principal $\GG$-bundles over $\HH$
is an $\HH$-$\GG$-equivariant map between them. Any such homomorphism
is in fact an isomorphism.

For a principal $\GG$-bundle $P$ over $\HH$ and a principal $\HH$-bundle $Q$
over $\KK$ we have the tensor product $Q\otimes P=Q\otimes_{\HH} P$, which
is a principal $\GG$-bundle over $\KK$.
The space $Q\otimes P$ is the space of
orbits of the diagonal $\HH$-action on the fibered product
$Q\times_{\HH_0}P$ \cite{MoerdijkMrcun2005,Mrcun1996}.
The Morita category $\GPD$ of topological
groupoids is the category with topological groupoids as objects and
isomorphism classes of principal bundles as morphisms,
and with composition induced by the tensor product.
Morita equivalences are the isomorphisms in $\GPD$.
The category of topological spaces
is a full subcategory of the Morita category $\GPD$.
There is a natural functor $\langle\oo\rangle:\Gpd\to\GPD$,
which is identity on objects, while it maps
a homomorphism $\phi:\HH\to\GG$ to the (isomorphism class of the)
principal $\GG$-bundle 
$\langle\phi\rangle=\HH_0\times_{\GG_0}\GG_1$ over $\HH$.
We also have the Morita bicategory of topological groupoids
in which principal bundles
are $1$-morphisms and morphisms of principal bundles are $2$-morphisms.

\subsection{Pointed and marked topological groupoids}
A {\em pointed} topological groupoid is a topological groupoid $\GG$,
together with a {\em basepoint} $a\in\GG_0$ (see \cite{Haefliger2010,Mrcun1996}).
More generally, a {\em marked} topological groupoid is a pair $(\GG,A)$, where
$\GG$ is a topological groupoid and $A$ is a subset of $\GG_0$.
A pointed topological groupoid $(\GG,a)$ can therefore be seen
as the marked topological groupoid $(\GG,\{a\})$.
A {\em homomorphism} between marked topological groupoids
$\phi:(\HH,B)\to(\GG,A)$ is a homomorphism $\phi:\HH\to\GG$
such that $\phi(B)\subset A$. A principal $(\GG,A)$-bundle
over $(\HH,B)$ is a pair $(P,\sigma)$, where
$P$ is a principal $\GG$-bundle over $\HH$ and $\sigma:B\to P$
is a section of $\pi:P\to\HH_0$
such that $\epsilon:(P,\sigma(B))\to (\GG_0,A)$ is a map of
topological pairs and $\pi:(P,\sigma(B))\to(\HH_0,B)$ is a map
of topological pairs with local sections (\textit{i.e.}\ for any
$b\in B$ there exist a section of $\pi$, defined on an open
neighbourhood $V$ of $b$ in $\HH_0$, which is a map
of topological pairs $(V,V\cap B)\to (P,\sigma(B))$).
In particular, if $(\GG,a)$ and $(\HH,b)$
are pointed topological groupoids and
$(P,\sigma)$ is principal $(\GG,a)$-bundle
over $(\HH,b)$, then we write
$(P,\sigma)=(P,\sigma(b))$.
Morphisms between principal $(\GG,A)$-bundles
over $(\HH,B)$ are morphisms of principal $\GG$-bundles
over $\HH$ which commute the given sections.
One can check that the tensor product of principal bundles
with sections has a natural section as well, thus we obtain
the Morita category $\GPD_{[\ast]}$ of marked topological groupoids.
As a full subcategory of $\GPD_{[\ast]}$, we have the Morita
category $\GPD_{\ast}$ of pointed topological groupoids.

\subsection{Group-marked topological groupoids}
Let $K$ be a fixed locally compact Hausdorff topological group.
A {\em $K$-basegroup} in a topological groupoid $\GG$ is
a homomorphism $\varphi:K\to\GG$, where we view $K$
as the topological groupoid $(K\rightrightarrows\ast)$.
In particular, such a
$K$-basegroup is
a homomorphism of topological groups $\varphi:K\to\GG_{\varphi(\ast)}$.
We write
$$ \hom(K,\GG)=\Gpd((K\rightrightarrows\ast),\GG) $$
for the space of all $K$-basegroups of $\GG$, equipped with
the compact-open topology, and $\omega:\hom(K,\GG)\to\GG_0$
for the natural map given by $\omega(\varphi)=\varphi(\ast)$.
Any homomorphism $\phi\in\Gpd(\HH,\GG)$ induces a map
$\phi_\ast:\hom(K,\HH)\to\hom(K,\GG)$.
A {\em $K$-pointed} topological groupoid is a topological
groupoid $\GG$ together with a $K$-basegroup $\rho$ in $\GG$;
we denote such a $K$-pointed topological groupoid by
$(\GG,a,\rho)$, where $a=\rho(\ast)$.
A homomorphism between $K$-pointed topological groupoids
$(\HH,b,\upsilon)$ and $(\GG,a,\rho)$ is a homomorphism
$\phi\in\Gpd(\HH,\GG)$ with $\phi_\ast(\upsilon)=\rho$.
In this way we obtain the category $\Gpd_K$ of $K$-pointed
topological groupoids, which is in other words the coslice
category $(K\rightrightarrows\ast)\downarrow\Gpd$.

For a principal $\GG$-bundle $P$ over $\HH$, any point
$p\in P$ induces a homomorphism of groups
$P_p:\HH_{\pi(p)}\to\GG_{\epsilon(p)}$,
uniquely determined by the property that
$h p=p P_p(h)$ for any $h\in \HH_{\pi(p)}$.
A {\em principal $(\GG,a,\rho)$-bundle over} $(\HH,b,\upsilon)$
is a principal
$(\GG,a)$-bundle $(P,p)$ over $(\HH,b)$
satisfying $P_p\com\upsilon=\rho$.
We obtain the Morita category $\GPD_K$ of $K$-pointed
topological groupoids, where the morphisms are given
by the isomorphism classes of principal bundles.

More generally, a {\em $K$-marked} topological groupoid
is a triple $(\GG,A,\rho)$, where $A\subset\GG_0$ and
$\rho:K\times A\to\GG$ is a continuous functor which
is identity on objects.
A {\em principal $(\GG,A,\rho)$-bundle over}
$(\HH,B,\upsilon)$  is a principal
$(\GG,A)$-bundle $(P,\sigma)$ over $(\HH,B)$ satisfying
$P_{\sigma(b)}(\upsilon(k,b))=\rho(k,\epsilon(\sigma(b)))$
for any $b\in B$ and $k\in K$. Again we obtain the Morita category
$\GPD_{[K]}$ of $K$-marked
topological groupoids, where the morphisms are given
by isomorphism classes of principal bundles.
The category $\GPD_K$ is a full subcategory of  $\GPD_{[K]}$.
An example of a $K$-marked topological groupoid
is $(K\times I^n,\partial I^n,\tau)$, where
$\tau:K\times\partial I^n\to K\times I^n$ is the inclusion.

\subsection{Homotopy}
Let $(\GG,A,\rho),(\HH,B,\upsilon)$ be $K$-marked
topological groupoids and $(P,\sigma),(P',\sigma')$
principal $(\GG,A,\rho)$-bundles over $(\HH,B,\upsilon)$.
A {\em homotopy} from $(P,\sigma)$ to $(P',\sigma')$
is principal $(\GG,A,\rho)$-bundle $(H,\lambda)$ over
$(\HH\times I,B\times I,\upsilon\times I)$ such
that the restrictions
of $(H,\lambda)$ to
$(\HH\times \{0\},B\times \{0\},\upsilon\times \{0\})$
and
$(\HH\times \{1\},B\times \{1\},\upsilon\times \{1\})$
are isomorphic to
$(P,\sigma)$ respectively $(P',\sigma')$.
We say that $(P,\sigma),(P',\sigma')$ are {\em homotopic}
if such a homotopy exists. One may show that homotopy is
an equivalence relation on the set of Morita morphisms
from $(\HH,B,\upsilon)$ to $(\GG,A,\rho)$
and that this equivalence relation behaves well
with respect to the composition in $\GPD_{[K]}$.
(To show the transitivity of the relation, one needs to
concatenate two homotopies, and this should be done by inserting
a trivial homotopy in the middle to ensure that the concatenated
principal bundle is locally trivial.)
We denote by
$$ \hGPD_{[K]}((\HH,B,\upsilon),(\GG,A,\rho))
=[(\HH,B,\upsilon),(\GG,A,\rho)] $$
the set of homotopy classes of principal
$(\GG,A,\rho)$-bundles over $(\HH,B,\upsilon)$.
We obtain the {\em homotopy Morita category}
$\hGPD_{[K]}$ of $K$-marked topological groupoids.

\subsection{Homotopy groups}
We define the {\em $n$-th homotopy group}
of a $K$-pointed topological groupoid $(\GG,a,\rho)$
by
$$ \pi_n^K(\GG)=\pi_n^K(\GG,a,\rho)= [(K\times I^n,\partial I^n,\tau),(\GG,a,\rho)]\;.$$
As usual, this is just a pointed set for $n=0$, a group for
$n=1$ and an abelian group for $n\geq 2$. For $K=1$ we recover the usual
homotopy groups $\pi_n(\GG,a)$ of the pointed topological groupoid
$(\GG,a)$. By definition we see that $\pi_n^K$ is homotopy Morita invariant,
as it is a functor defined on the category $\GPD_{K}$ and also on
$\hGPD_{K}$. Any homomorphism 
$K\to K'$ between locally compact Hausdorff topological groups
induces a natural transformation from $\pi_n^{K'}$ and
$\pi_n^K$ over the natural functor $\GPD_{K'}\to\GPD_K$.

\section{Homotopy sequence of a topological groupoid with a basegroup}

\subsection{The groupoid $\GG^K$}
Let $K$ be a locally compact Hausdorff topological group.
For a topological groupoid $\GG$ we have
the natural map
$\omega:\hom(K,\GG)\to\GG_0$, $\varphi\mapsto\varphi(\ast)$.
Now observe that there is a natural right (adjoint) $\GG$-action
on $\hom(K,\GG)$ along $\omega$, given by
$$ (\varphi g)(k) = g^{-1}\varphi(k)g = (\Ad_{g^{-1}}\com\varphi)(k)$$
for any $k\in K$. We denote by
$$ \GG^K  = \hom(K,\GG)\rtimes\GG $$
the associated translation groupoid. In particular, the map
$\omega$ induces a homomorphism
$\omega=\omega_\GG : \GG^K\to\GG$.
The definition of $\GG^K$ can be extended to a functor.
Indeed, if $P$ is a principal
$\GG$-bundle over $\HH$, we take
$$ P^K=\hom(K,\HH)\times_{\HH_0} P \;.$$
The space $P^K$ has a natural structure of
a principal $\GG^K$-bundle over $\HH^K$, given by maps
$\pi(\varphi,p)=\varphi$, $\epsilon(\varphi,p)=P_p\com\varphi$ and
actions
$h(\varphi,p)=(\Ad_h\com\varphi,hp)$,
$(\varphi,p)g=(\varphi,pg)$.
One can check that this gives us a functor
$(\oo)^K:\GPD\to\GPD$. Note that we have a natural isomorphism
between $P^K\otimes\langle\omega_\GG\rangle$ and
$\langle\omega_\HH\rangle\otimes P$.
Note also that a homomorphism $\phi:\HH\to\GG$ induces
a homomorphism $\phi^K:\HH^K\to\GG^K$ in the obvious way and
that $\langle\phi^K\rangle$ is naturally isomorphic 
to $\langle\phi\rangle^K$.

\begin{prop}\label{exponent}
Let $\GG,\HH$ be topological groupoids and $K$
a locally compact Hausdorff topological group.
There is a natural bijection
$$ \GPD(K\times\HH,\GG)\cong\GPD(\HH,\GG^K)\;. $$
\end{prop}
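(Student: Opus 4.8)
The plan is to exhibit the bijection by directly transforming principal bundles while keeping the underlying total space fixed. Write $\Phi$ for the passage from a principal $\GG$-bundle $P$ over $K\times\HH$ to a principal $\GG^K$-bundle over $\HH$, and $\Psi$ for the reverse, and show that $\Phi$ and $\Psi$ are mutually inverse, compatible with isomorphisms and with the tensor product, and natural in $\GG$ and $\HH$; this yields the asserted bijection of $\GPD$-hom-sets.

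First I would analyze $K\times\HH$: since $K=(K\rightrightarrows\pt)$ has a single object, $(K\times\HH)_0=\HH_0$ and $(K\times\HH)_1=K\times\HH_1$, and $\HH$ embeds as a subgroupoid via $h\mapsto(1_K,h)$, while the isotropy group of $K\times\HH$ at $x$ is $K\times\HH_x$. Given a principal $\GG$-bundle $P$ over $K\times\HH$, the bundle $\Phi(P)$ has the same underlying space $P$ and the same anchor $\pi\colon P\to\HH_0$; the left $\HH$-action is the restriction of the $(K\times\HH)$-action along $h\mapsto(1_K,h)$. For the right $\GG^K$-structure I use the isotropy homomorphism $P_p\colon(K\times\HH)_{\pi(p)}\to\GG_{\epsilon(p)}$ from the excerpt, defining the anchor $\epsilon'\colon P\to\hom(K,\GG)$ by $\epsilon'(p)(k)=P_p(k,1_{\pi(p)})$, so that $\omega\com\epsilon'=\epsilon$. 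One checks $(k,1_{\pi(p)})p=p\,\epsilon'(p)(k)$, whence $\epsilon'(p)$ is a genuine homomorphism $K\to\GG_{\epsilon(p)}$; and from $\epsilon'(pg)=\Ad_{g^{-1}}\com\epsilon'(p)=\epsilon'(p)g$ together with $\epsilon'((1_K,h)p)=\epsilon'(p)$ the right $\GG$-action on $P$ descends to the right $\GG^K$-action $p\cdot(\epsilon'(p),g)=pg$.

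The bundle axioms for $\Phi(P)$ reduce to those of $P$: condition (i) is unchanged because $\pi$ is unchanged, and condition (iii) is immediate once one observes that $P\times_{(\GG^K)_0}(\GG^K)_1\cong P\times_{\GG_0}\GG_1$ via $(p,(\epsilon'(p),g))\mapsto(p,g)$, under which the division map of $\Phi(P)$ becomes exactly that of $P$. The inverse construction $\Psi$ reverses these steps: from a principal $\GG^K$-bundle $Q$ over $\HH$ with anchor $\epsilon'\colon Q\to\hom(K,\GG)$ one defines a right $\GG$-action along $\epsilon=\omega\com\epsilon'$ by $q\cdot g=q\cdot(\epsilon'(q),g)$, a left $K$-action by $k\cdot q=q\cdot\epsilon'(q)(k)$, and combines the latter with the given left $\HH$-action into a left $(K\times\HH)$-action $(k,h)\cdot q=h\cdot(k\cdot q)$; the defining relations above show that $\Phi$ and $\Psi$ are mutually inverse.

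I expect the main obstacle to be the topological bookkeeping rather than the algebra. The crucial point is continuity of $\epsilon'\colon P\to\hom(K,\GG)$ for the compact-open topology: the map $P\times K\to\GG_1$, $(p,k)\mapsto P_p(k,1_{\pi(p)})$, is continuous because it is the composite of the action with the division homeomorphism from axiom (iii), and then the exponential law for mapping spaces --- valid precisely because $K$ is locally compact Hausdorff --- promotes this to continuity of $p\mapsto\epsilon'(p)$; the same law governs continuity of the reconstructed $K$-action in $\Psi$. It remains to check that $\Phi$ and $\Psi$ carry isomorphisms to isomorphisms and respect the tensor product, so that they descend to the Morita hom-sets, and that the resulting bijection is natural in $\GG$ and $\HH$, which is routine given the naturality of $P_p$ and of the functor $(\oo)^K$ recorded before the statement.
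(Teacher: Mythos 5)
Your proof is correct and follows essentially the same route as the paper's: both keep the total space $P$ fixed, define the new anchor $\bar{\epsilon}(p)(k)=P_p(k,1_{\pi(p)})$ (your $\epsilon'$) characterized by $(k,1_{\pi(p)})p=p\,\bar{\epsilon}(p)(k)$, verify $\GG$-equivariance to obtain the right $\GG^K$-action over $\HH$, and invert by recovering $\epsilon=\omega\com\bar{\epsilon}$ and the left $K$-action from that same equation. Your extra checks (the bundle axioms via the identification $P\times_{(\GG^K)_0}(\GG^K)_1\cong P\times_{\GG_0}\GG_1$, and continuity of $\epsilon'$ via the exponential law, which is exactly where local compactness of $K$ enters) simply fill in details the paper leaves to the reader.
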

\begin{proof}
Let $P$ be a principal $\GG$-bundle over $K\times\HH$.
For any $p\in P$ we have the associated
homomorphism of topological groups
$P_p:K\times\HH_{\pi(p)}\to\GG_{\epsilon(p)}$.
There is also the associated map $\bar{\epsilon}:P\to\hom(K,\GG)$
given by $\bar{\epsilon}(p)(k)=P_p(k,1_{\pi(p)})$. Explicitly,
the map $\bar{\epsilon}$ is characterized by the equality
\begin{equation}\label{barepsilon}
(k,1_{\pi(p)})p=p\bar{\epsilon}(p)(k)\;,
\end{equation}
which holds for any $k\in K$.
One can check that $\bar{\epsilon}$ is $\GG$-equivariant,
so we have a right $\GG^K$-action on $P$ along $\bar{\epsilon}$.
Furthermore, this action commutes with the left $\HH$-action
and $P$ is also a principal $\GG^K$-bundle over $\HH$.
Finally, observe that $\epsilon$ can be reconstructed from
$\bar{\epsilon}$ as $\epsilon=\omega\com\bar{\epsilon}$,
and the left $K$-action on $P$ can be reconstructed
by Equation (\ref{barepsilon}).
\end{proof}

\Rem
In fact we proved that we have a natural
isomorphism of groupoids
$$ \underline{\GPD}(K\times\HH,\GG)\cong\underline{\GPD}(\HH,\GG^K)\;, $$
where we use the underlined notation $\underline{\GPD}$ to denote
the set of principal bundles rather than the set of their isomorphism
classes, \textit{i.e.}\ the 1-morphisms in the Morita bicategory. 
We can lift this to the context of $K$-marked groupoids. In particular, we
obtain a natural isomorphism of groupoids
$$ \underline{\GPD}_{[K]}((K\times I^n,\partial I^n,\tau),(\GG,a,\rho))
   \cong\underline{\GPD}_{[\ast]}((I^n,\partial I^n),(\GG^K,\rho)) \;, $$
for any $K$-pointed topological groupoid
$(\GG,a,\rho)$. 

The proposition shows that the groupoid $\GG^K$ behaves
as the exponent in the Morita category.
For a more general study of exponents in the context of
topological stacks, see \cite{Carchedi}.

\begin{cor}\label{pin}
Let $K$ be a locally compact Hausdorff topological group.
For any $K$-pointed topological groupoid
$(\GG,a,\rho)$ there is a natural isomorphism
$$ \pi_n^K(\GG,a,\rho)\cong\pi_n(\GG^K,\rho) \;.$$
\end{cor}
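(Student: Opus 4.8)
The plan is to deduce the corollary from the Remark following Proposition~\ref{exponent}, which already supplies the essential content: a natural isomorphism of groupoids of principal bundles in the $K$-marked setting. First I would apply that Remark with base $(I^n,\partial I^n)$ and pass from the underlined groupoids (objects = principal bundles) to their sets of isomorphism classes, obtaining a natural bijection
$$ \GPD_{[K]}((K\times I^n,\partial I^n,\tau),(\GG,a,\rho))
   \cong\GPD_{[\ast]}((I^n,\partial I^n),(\GG^K,\rho))\;. $$
This is an honest bijection of the underlying sets of Morita morphisms; what remains is to check that it is compatible with the homotopy relation and, for $n\geq 1$, with the group operation.

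For homotopy invariance I would observe that, by definition, a homotopy between principal $(\GG,a,\rho)$-bundles over $(K\times I^n,\partial I^n,\tau)$ is a principal bundle over the product marked groupoid $(K\times I^n,\partial I^n,\tau)\times I$, which has the form $(K\times(I^n\times I),\partial I^n\times I,\upsilon')$ with $\upsilon'$ the inclusion basegroup along the side boundary $\partial I^n\times I\subset I^n\times I$. This is again a $K$-marked groupoid of the type handled by the $K$-marked exponential correspondence of the Remark, now with the base $(I^n\times I,\partial I^n\times I)$ in place of $(I^n,\partial I^n)$. The same correspondence therefore identifies such homotopies with principal $(\GG^K,\rho)$-bundles over $(I^n\times I,\partial I^n\times I)$, which are precisely the homotopies computing $\pi_n(\GG^K,\rho)$. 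Using naturality of the correspondence with respect to the two face inclusions $I^n\times\{0\},I^n\times\{1\}\hookrightarrow I^n\times I$, the restrictions to the ends of a homotopy match on both sides, so two bundles are homotopic on the left if and only if their partners are homotopic on the right. Hence the bijection descends to homotopy classes, yielding the isomorphism of pointed sets $\pi_n^K(\GG,a,\rho)\cong\pi_n(\GG^K,\rho)$.

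Finally, for $n\geq 1$ I would verify that the induced map is a group homomorphism. On both sides the multiplication is given by concatenation along the first coordinate of $I^n$: one subdivides the cube, places the two given bundles on the two halves, and glues along the shared face where both are pinched onto the marked boundary. This construction lives entirely in the $I^n$-variable, with $K$ entering only as a passive factor, so it is natural in the space argument $\HH$. Since the exponential correspondence of the Remark is itself natural in $\HH$, commuting with restriction to subcubes and with the gluing of bundles over a common face, it intertwines the two concatenation operations; the resulting bijection on homotopy classes is thus a group isomorphism, abelian for $n\geq 2$ as both sides then are.

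The main obstacle I expect is not the existence of the bijection, which is handed to us by the Remark, but the bookkeeping that transports the homotopy relation and the concatenation operation through the functor $(\oo)^K$. Concretely, one must confirm that the gluing used to concatenate bundles preserves local triviality (including the insertion of a trivial homotopy already flagged in the discussion of transitivity), and that the marking data and the basepoint $\rho$ restrict compatibly on the faces used for subdivision. These verifications are routine once the naturality in $\HH$ of the exponential correspondence is invoked systematically.
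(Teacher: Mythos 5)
Your proposal is correct and takes essentially the same route as the paper: Corollary~\ref{pin} is intended as an immediate consequence of the Remark's natural isomorphism of groupoids $\underline{\GPD}_{[K]}((K\times I^n,\partial I^n,\tau),(\GG,a,\rho))\cong\underline{\GPD}_{[\ast]}((I^n,\partial I^n),(\GG^K,\rho))$, descended to homotopy classes exactly as you describe. The compatibility checks you spell out (applying the same correspondence over $(I^n\times I,\partial I^n\times I)$ for homotopies, and naturality for the concatenation product) are the routine verifications the paper leaves implicit, and they are indeed straightforward because the correspondence of Proposition~\ref{exponent} leaves the underlying space of a bundle unchanged and only reinterprets its structure maps.
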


\subsection{Serre $K$-fibrations}
Let $\HH$ and $\GG$ be topological groupoids and let
$P$ be a principal $\GG$-bundle over $\HH$. Recall from \cite{Jelenc}
that $P$ is a {\em Serre fibration} if
for any given principal $\HH$-bundle $\alpha$ over $I^n$,
principal $\GG$-bundle $\beta$ over $I^{n+1}$ and
isomorphism $\chi$ from
$\alpha\otimes P$ to $\beta|_{I^n\times\left\{0\right\}}$,
there exist
a principal $\HH$-bundle $\bar{\alpha}$ over $I^{n+1}$,
a principal $\GG$-bundle $\bar{\beta}$ over $I^{n+1}$ and
isomorphisms of principal bundles
$\bar{\chi}:\bar{\alpha}\otimes P\to \bar{\beta}$,
$\zeta:\alpha\to\bar{\alpha}|_{I^n\times\{0\}}$ and
$\xi:\beta\to\bar{\beta}$ such that
$\bar{\chi}\com (\zeta\otimes P)=\xi\com\chi$.
This notion is defined in the Morita bicategory of topological groupoids,
however it is clearly a well-defined property
of a Morita map in $\GPD$.
It is a generalization
of the notion of Serre fibration in the category of topological spaces,
and can be stated as the existence of the
morphism $\bar{\alpha}$ in the commutative diagram
$$
\xymatrix{
I^n\ar[r]^-\alpha\ar[d]& \HH\ar[d]^-{P}\\
I^{n+1}\ar[r]_-\beta\ar@{.>}[ru]^-{\bar{\alpha}} & \GG
}
$$
of 1-arrows in the Morita bicategory of topological groupoids.
We say that a homomorphism $\phi:\HH\to\GG$ of topological
groupoids is a Serre fibration if the associated
principal $\GG$-bundle $\langle\phi\rangle$ over $\HH$
is a Serre fibration.

Let us state some basic properties of Serre fibrations which we will
use in the rest of the paper (see \cite{Jelenc}, compare to \cite{Noohi}):

(i) 
Any Morita equivalence (\textit{i.e.}\ an isomorphism in $\GPD$)
is a Serre fibration.

(ii)
The composition of Serre fibrations in $\GPD$ is
a Serre fibration.

(iii)
If $\phi:\HH\to\GG$ is a homomorphism between topological groupoids
such that $\phi:\HH_0\to\GG_0$ is a Serre fibration and
$(\trg,\phi):\HH_1\to\HH_0\times_{\GG_0}\GG_1$
is a surjective Serre fibration, then $\phi$ is a Serre fibration.

(iv)
Any principal $(\GG,a)$-bundle 
$(P,p)$ over $(\HH,b)$, which is a Serre fibration,
induces a natural long exact sequence
$$
\ldots\to\pi_n(\HH\ltimes\epsilon^{-1}(a))\stackrel{\pi_n(\textnormal{pr}_1)}{\to}\pi_n(\HH)
\stackrel{\pi_{n}(P)}{\to}\pi_n(\GG)\to\pi_{n-1}(\HH\ltimes\epsilon^{-1}(a))\to\ldots 
$$ 
The translation groupoid $\HH\ltimes\epsilon^{-1}(a)$ is called
the {\em fiber} of $P$ over $a$.

We say that a principal $\GG$-bundle $P$ over $\HH$ is
a {\em Serre $K$-fibration} if the associated principal
$\GG^K$-bundle $P^K$ over $\HH^K$ is a Serre fibration.
By Proposition \ref{exponent} we see that this is true if and only if
for any given principal $\HH$-bundle $\alpha$ over $K\times I^n$,
principal $\GG$-bundle $\beta$ over $K\times I^{n+1}$ and
isomorphism $\chi$ from
$\alpha\otimes P$ to $\beta|_{K\times I^n\times\left\{0\right\}}$,
there exist
a principal $\HH$-bundle $\bar{\alpha}$ over $K\times I^{n+1}$,
a principal $\GG$-bundle $\bar{\beta}$ over $K\times I^{n+1}$ and
isomorphisms of principal bundles
$\bar{\chi}:\bar{\alpha}\otimes P\to \bar{\beta}$,
$\zeta:\alpha\to\bar{\alpha}|_{K\times I^n\times\{0\}}$ and
$\xi:\beta\to\bar{\beta}$ such that
$\bar{\chi}\com (\zeta\otimes P)=\xi\com\chi$.
$$
\xymatrix{
K\times I^n\ar[r]^-\alpha\ar[d]& \HH\ar[d]^-{P}\\
K\times I^{n+1}\ar[r]_-\beta\ar@{.>}[ru]^-{\bar{\alpha}} & \GG
}
$$
Analogously, we say that 
a homomorphism $\phi:\HH\to\GG$ is a Serre $K$-fibration
if $\phi^K:\HH^K\to\GG^K$ is a Serre fibration, which is true
of course if and only if $\langle\phi\rangle^K$ is a Serre fibration.

\begin{lem}\label{lem}
Let $\phi:\HH\to\GG$ be a homomorphism of topological groupoids
such that the map
$(\trg,\phi):\HH_1\to\HH_0\times_{\GG_0}\GG_1$
is a surjective Serre fibration.
Suppose that $X$ is a space with a right $\GG$-action,
$Y$ a space with a right $\HH$-action
and $f:Y\to X$ a $\phi$-equivariant Serre fibration.
Then the induced homomorphism
$Y\rtimes\HH\to X\rtimes\GG$ is a Serre fibration.
\end{lem}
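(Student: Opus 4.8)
The plan is to deduce the statement from the sufficient condition (iii) for a groupoid homomorphism to be a Serre fibration, applied to the induced homomorphism $F:Y\rtimes\HH\to X\rtimes\GG$. Its object component is $F_0=f:Y\to X$ and its arrow component sends $(y,h)\mapsto(f(y),\phi(h))$; that this is a well-defined continuous homomorphism is immediate from the $\phi$-equivariance of $f$, which supplies $\epsilon_X\com f=\phi\com\epsilon_Y$ on anchors (so the pair $(f(y),\phi(h))$ is composable) and $f(yh)=f(y)\phi(h)$ (so $F$ respects source and products). The first hypothesis of (iii), namely that $F_0=f$ is a Serre fibration of spaces, is then exactly our assumption on $f$, so everything reduces to the arrow-level condition.

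For that condition I would first unwind the relevant fibered product. Since the target map of a translation groupoid is the first projection, one has
$$(Y\rtimes\HH)_0\times_{(X\rtimes\GG)_0}(X\rtimes\GG)_1
  =Y\times_X(X\times_{\GG_0}\GG_1)\cong Y\times_{\GG_0}\GG_1\;,$$
where $Y\to\GG_0$ is the common map $\phi\com\epsilon_Y=\epsilon_X\com f$; here the $\phi$-equivariance of $f$ is precisely what lets me identify the two composites. Under this identification the map $(\trg,F)$ becomes simply $(y,h)\mapsto(y,\phi(h))$, a map I will call $\Psi:Y\times_{\HH_0}\HH_1\to Y\times_{\GG_0}\GG_1$.

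The key observation is that $\Psi$ is nothing but the base change of $(\trg,\phi):\HH_1\to\HH_0\times_{\GG_0}\GG_1$ along the anchor $\epsilon_Y:Y\to\HH_0$. Indeed, $(\trg,\phi)$ is a map of spaces over $\HH_0$ (via $\trg$ on $\HH_1$ and via the first projection on $\HH_0\times_{\GG_0}\GG_1$), and a direct check shows that the square with $\Psi$ on one side and $(\trg,\phi)$ on the other is cartesian. I would then invoke stability under pullback: a Serre fibration of spaces remains one after any base change, and a surjection stays surjective (given $(y,(x,g))$ in the base with $\epsilon_Y(y)=x$, surjectivity of $(\trg,\phi)$ yields $h$ with $\trg(h)=x$, $\phi(h)=g$, and then $(y,h)$ is a genuine arrow). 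Since $(\trg,\phi)$ is a surjective Serre fibration by hypothesis, so is $\Psi=(\trg,F)$, and (iii) gives that $F$ is a Serre fibration. The step demanding the most care is the bookkeeping identification of $(\trg,F)$ as this pullback: one must track which anchor forms each fibered product and use the equivariance of $f$ to match $\epsilon_X\com f$ with $\phi\com\epsilon_Y$, after which the cartesian square and the pullback-stability of (surjective) Serre fibrations make the conclusion automatic.
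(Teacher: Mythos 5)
Your proof is correct and follows essentially the same route as the paper's: both apply the sufficient criterion (iii) to the induced homomorphism (with the object-level condition supplied by the hypothesis on $f$), and both identify the arrow-level map $(\trg,F)$ as a pullback of the surjective Serre fibration $(\trg,\phi):\HH_1\to\HH_0\times_{\GG_0}\GG_1$ along $\epsilon_Y$, concluding by stability of surjective Serre fibrations under base change. The only difference is expository: you spell out the identification of the fibered products and the surjectivity check, which the paper leaves implicit.
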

\begin{proof}
We have to show that the map
$$ (\trg,f\times\phi):(Y\rtimes\HH)_1\to
   (Y\rtimes\HH)_0\times_{(X\rtimes\GG)_0}(X\rtimes\GG)_1$$
is a surjective Serre fibration. Equivalently, we have to show
that the map 
$Y\times_{\HH_0}\HH_1\to Y\times_{\HH_0} (\HH_0\times_{\GG_0}\GG_1)$,
$(y,h)\mapsto (y,\trg(h),\phi(h))$, is a surjective
Serre fibration, which is true because it is a pull-back of
the surjective Serre fibration
$(\trg,\phi):\HH_1\to\HH_0\times_{\GG_0}\GG_1$.
\end{proof}

\begin{prop}
Let $K$ be a locally compact
Hausdorff topological group and
let $\phi:\HH\to\GG$ be a homomorphism of topological groupoids
such that the map
$\phi_\ast:\hom(K,\HH)\to\hom(K,\GG)$ is a Serre fibration and
the map
$(\trg,\phi):\HH_1\to\HH_0\times_{\GG_0}\GG_1$
is a surjective Serre fibration. Then $\phi$ is
a Serre $K$-fibration.
\end{prop}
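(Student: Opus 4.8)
The plan is to apply Lemma \ref{lem} directly, after recognizing that $\phi^K:\HH^K\to\GG^K$ is precisely the morphism of translation groupoids induced by $\phi$ acting on the spaces of $K$-basegroups. By definition, $\phi$ is a Serre $K$-fibration if and only if $\phi^K:\HH^K\to\GG^K$ is a Serre fibration, where $\HH^K=\hom(K,\HH)\rtimes\HH$ and $\GG^K=\hom(K,\GG)\rtimes\GG$ are the translation groupoids of the adjoint actions. Thus I would set $Y=\hom(K,\HH)$ with its adjoint $\HH$-action, $X=\hom(K,\GG)$ with its adjoint $\GG$-action, and take $f=\phi_\ast:\hom(K,\HH)\to\hom(K,\GG)$. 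The first thing to check is that the homomorphism $Y\rtimes\HH\to X\rtimes\GG$ produced by Lemma \ref{lem} coincides with $\phi^K$: on objects it is $\varphi\mapsto\phi_\ast(\varphi)=\phi\com\varphi$, and on arrows $(\varphi,h)\mapsto(\phi_\ast(\varphi),\phi(h))$, which is exactly the definition of $\phi^K$.

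Next I would verify the three hypotheses of Lemma \ref{lem}. The condition that $(\trg,\phi):\HH_1\to\HH_0\times_{\GG_0}\GG_1$ is a surjective Serre fibration is one of our standing assumptions, and the condition that $f=\phi_\ast$ is a Serre fibration is the other. The only remaining point is that $f=\phi_\ast$ is $\phi$-equivariant, which follows from a one-line computation with the adjoint actions: for $\varphi\in\hom(K,\HH)$ and $h\in\HH_1$ with $\omega(\varphi)=\trg(h)$, and any $k\in K$, we have $\phi_\ast(\varphi h)(k)=\phi(h^{-1}\varphi(k)h)=\phi(h)^{-1}\phi(\varphi(k))\phi(h)=(\phi_\ast(\varphi)\,\phi(h))(k)$, so $\phi_\ast(\varphi h)=\phi_\ast(\varphi)\,\phi(h)$; one also notes $\omega(\phi_\ast(\varphi))=\phi(\omega(\varphi))=\phi(\trg(h))=\trg(\phi(h))$, so the products are defined on the correct domains.

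With all hypotheses in place, Lemma \ref{lem} gives that $Y\rtimes\HH\to X\rtimes\GG$, namely $\phi^K:\HH^K\to\GG^K$, is a Serre fibration, which is exactly the assertion that $\phi$ is a Serre $K$-fibration. The argument is short precisely because Lemma \ref{lem} was engineered for morphisms of translation groupoids of this form, so I do not anticipate any genuine obstacle. The only steps that require care are the bookkeeping identifying the abstract induced morphism of Lemma \ref{lem} with $\phi^K$, and the equivariance check for the adjoint actions; everything else is a direct appeal to the two hypotheses.
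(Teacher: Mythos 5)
Your proposal is correct and takes essentially the same route as the paper: the paper's proof is precisely a direct application of Lemma \ref{lem} with $Y=\hom(K,\HH)$, $X=\hom(K,\GG)$ and $f=\phi_\ast$, which you carry out with the $\phi$-equivariance check and the identification of the induced homomorphism with $\phi^K$ made explicit. The only cosmetic difference is that the paper additionally remarks that the hypothesis on $\phi_\ast$ forces $\phi:\HH_0\to\GG_0$ to be a Serre fibration, a fact not actually needed once Lemma \ref{lem} is invoked in the form you use it.
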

\begin{proof}
Note that the assumption that
$\phi_\ast:\hom(K,\HH)\to\hom(K,\GG)$ is a Serre
fibration implies that $\phi:\HH_0\to\GG_0$ is also
a Serre fibration. The proposition is hence
a consequence of Lemma \ref{lem}.
\end{proof}

\begin{prop}\label{lexPK}
Let $K$ be a locally compact Hausdorff topological group,
let $(\GG,a,\rho)$, $(\HH,b,\upsilon)$ be $K$-pointed
topological groupoids
and let
$(P,p)$ be a principal $(\GG,a,\rho)$-bundle over $(\HH,b,\upsilon)$
which is a Serre fibration.
We have a natural long exact sequence
\begin{equation*}
\begin{split}
\ldots
\to&\pi_n(\HH\ltimes(\hom(K,\HH)\times\epsilon^{-1}(a)),(\rho,p))
\to\pi_n^K(\HH,b,\upsilon)
\stackrel{\pi_{n}(P)}{\to}\pi_n^K(\GG,a,\rho)\\
\to&\pi_{n-1}(\HH\ltimes(\hom(K,\HH)\times\epsilon^{-1}(a)),(\rho,p))
\to
\ldots 
\end{split}
\end{equation*}
\end{prop}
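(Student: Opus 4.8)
The plan is to deduce the sequence from the ordinary long exact sequence of a Serre fibration, i.e. property (iv) above, applied to the exponential bundle $P^K$. By Corollary \ref{pin} there are natural isomorphisms $\pi_n^K(\HH,b,\upsilon)\cong\pi_n(\HH^K,\upsilon)$ and $\pi_n^K(\GG,a,\rho)\cong\pi_n(\GG^K,\rho)$, and under these the map $\pi_n(P)$ corresponds to the map $\pi_n(P^K)$ induced by the principal $\GG^K$-bundle $P^K=\hom(K,\HH)\times_{\HH_0}P$ over $\HH^K$; this compatibility is exactly the functoriality of $(\oo)^K$ together with the definition of $\pi_n^K$. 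So it suffices to establish the long exact sequence of $P^K$. First I would record that $(\upsilon,p)$ is a legitimate basepoint: since $\omega(\upsilon)=b=\pi(p)$ the pair lies in $\hom(K,\HH)\times_{\HH_0}P$, and $\epsilon(\upsilon,p)=P_p\com\upsilon=\rho$ by the defining property of a principal $(\GG,a,\rho)$-bundle over $(\HH,b,\upsilon)$. Hence $(P^K,(\upsilon,p))$ is a principal $(\GG^K,\rho)$-bundle over $(\HH^K,\upsilon)$.

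Next I would apply property (iv) to $P^K$. For this one needs $P^K$ to be a Serre fibration over $\HH^K$, which is precisely the statement that $P$ is a Serre $K$-fibration; this is what the fibration hypothesis on $P$ must provide, via the characterization following Proposition \ref{exponent} matching lifting problems for $P^K$ over $I^n\hookrightarrow I^{n+1}$ with lifting problems for $P$ over $K\times I^n\hookrightarrow K\times I^{n+1}$. Granting this, property (iv) yields the exact sequence
$$\ldots\to\pi_n(\HH^K\ltimes\epsilon^{-1}(\rho))\stackrel{\pi_n(\pr_1)}{\to}\pi_n(\HH^K,\upsilon)\stackrel{\pi_n(P^K)}{\to}\pi_n(\GG^K,\rho)\to\pi_{n-1}(\HH^K\ltimes\epsilon^{-1}(\rho))\to\ldots\;,$$
where $\epsilon^{-1}(\rho)=\{(\varphi,q)\in\hom(K,\HH)\times_{\HH_0}\epsilon^{-1}(a):P_q\com\varphi=\rho\}$ is the fiber of $P^K$ over $\rho$, pointed at $(\upsilon,p)$.

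It then remains to identify the fiber groupoid $\HH^K\ltimes\epsilon^{-1}(\rho)$ with $\HH\ltimes(\hom(K,\HH)\times\epsilon^{-1}(a))$, taken at the level $P_q\com\varphi=\rho$. The key observation is the transformation rule $P_{hq}=P_q\com\Ad_{h^{-1}}$ for the induced isotropy homomorphisms, valid for any $h\in\HH_1$ and $q\in P$ with $\src(h)=\pi(q)$: writing $k(hq)=(kh)q=h(\Ad_{h^{-1}}(k)\,q)$ and using the defining relation of $P_q$ gives it at once. This rule shows simultaneously that the formula $h\cdot(\varphi,q)=(\Ad_h\com\varphi,hq)$ defines a left $\HH$-action on $\epsilon^{-1}(\rho)$ along $(\varphi,q)\mapsto\pi(q)$ (the constraint is preserved, since $P_{hq}\com(\Ad_h\com\varphi)=P_q\com\varphi$) and that an arrow of $\HH^K$ acting on a point of $\epsilon^{-1}(\rho)$ is completely determined by its $\HH_1$-component $h$, the $\hom(K,\HH)$-component being forced to equal $\Ad_h\com\varphi$. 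Hence the translation groupoid by $\HH^K$ collapses to the translation groupoid by $\HH$, giving $\HH^K\ltimes\epsilon^{-1}(\rho)\cong\HH\ltimes\epsilon^{-1}(\rho)$, which is the stated fiber.

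Finally I would transport the displayed sequence through the isomorphisms of Corollary \ref{pin} and through this identification of the fiber, and check that all maps are natural in $(P,p)$; naturality is inherited from property (iv) and the functoriality of $(\oo)^K$. The main obstacle is this fiber identification, together with the point preceding it, namely ensuring that $P^K$ really is a Serre fibration under the stated hypothesis. The transformation rule $P_{hq}=P_q\com\Ad_{h^{-1}}$ is the computational heart, as it is what forces the $\hom(K,\HH)$-coordinate to move by $\Ad$ and thereby reduces the $\HH^K$-action on the fiber to an $\HH$-action.
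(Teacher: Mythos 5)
Your proposal is correct and takes essentially the same route as the paper: the paper's entire proof is the single sentence that this is ``the long exact sequence of homotopy groups associated to the Serre fibration $P^K$'', and your write-up supplies precisely the details behind it --- the basepoint $(\upsilon,p)$ with $P_p\com\upsilon=\rho$, property (iv) applied to $P^K$, Corollary \ref{pin}, and the fiber identification via the rule $P_{hq}=P_q\com\Ad_{h^{-1}}$, which also makes precise the (loosely written) fiber and basepoint appearing in the statement. Your reading of the fibration hypothesis as the assertion that $P^K$ is a Serre fibration (i.e.\ that $P$ is a Serre $K$-fibration) is exactly the reading the paper's own proof implicitly requires.
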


\begin{proof}
This is simply the long exact sequence of homotopy groups
associated to the Serre fibration $P^K$.
\end{proof}

\begin{prop}
Let $\GG$ be a topological groupoid and $K$ a locally compact
Hausdorff topological group. If $\omega:\hom(K,\GG)\to\GG_0$
is a Serre fibration, then $\omega:\GG^K\to\GG$ is a Serre fibration.
\end{prop}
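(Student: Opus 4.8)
The plan is to realize $\omega:\GG^K\to\GG$ as a morphism of translation groupoids and then invoke Lemma \ref{lem}. First I would identify $\GG$ with the translation groupoid $\GG_0\rtimes\GG$ associated to the canonical right $\GG$-action on its own object space $\GG_0$ along the identity map $\id:\GG_0\to\GG_0$, namely $x\cdot g=\src(g)$ whenever $\trg(g)=x$. Under this identification the arrow space $\GG_0\times_{\GG_0}\GG_1$ is just $\GG_1$ (via $g\mapsto(\trg(g),g)$), and the translation groupoid $\GG_0\rtimes\GG$ is isomorphic to $\GG$ itself, matching the source, target and product conventions for translation groupoids recorded earlier.

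Next, I would recall that $\GG^K=\hom(K,\GG)\rtimes\GG$ is the translation groupoid of the adjoint right $\GG$-action on $\hom(K,\GG)$ along $\omega$, and check that $\omega:\hom(K,\GG)\to\GG_0$ is equivariant over $\id_\GG$. Since $(\varphi g)(k)=g^{-1}\varphi(k)g$, conjugation by $g^{-1}$ carries the isotropy at $\trg(g)=\omega(\varphi)$ to the isotropy at $\src(g)$, so $\omega(\varphi g)=\src(g)=\omega(\varphi)\cdot g$ whenever $\omega(\varphi)=\trg(g)$. Thus $\omega$ is an $\id_\GG$-equivariant map, and the homomorphism $\hom(K,\GG)\rtimes\GG\to\GG_0\rtimes\GG$ it induces between the translation groupoids sends $\varphi\mapsto\omega(\varphi)$ on objects and $(\varphi,g)\mapsto g$ on arrows; under the identifications above this is precisely $\omega:\GG^K\to\GG$.

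Then I would apply Lemma \ref{lem} with $\HH=\GG$, $\phi=\id_\GG$, $X=\GG_0$, $Y=\hom(K,\GG)$ and $f=\omega$. Its first hypothesis, that $(\trg,\id_\GG):\GG_1\to\GG_0\times_{\GG_0}\GG_1$ be a surjective Serre fibration, holds because this map is a homeomorphism (its inverse is the second projection) and any homeomorphism of spaces is a surjective Serre fibration. The map $f=\omega$ is a $\phi$-equivariant Serre fibration by the equivariance just verified together with the standing assumption that $\omega:\hom(K,\GG)\to\GG_0$ is a Serre fibration. Lemma \ref{lem} then yields that the induced homomorphism $\hom(K,\GG)\rtimes\GG\to\GG_0\rtimes\GG$ is a Serre fibration, which is exactly the assertion that $\omega:\GG^K\to\GG$ is a Serre fibration.

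I expect no serious obstacle: essentially all the content is already packaged into Lemma \ref{lem}, and what remains is purely bookkeeping, namely the identifications $\GG\cong\GG_0\rtimes\GG$ and $\GG^K=\hom(K,\GG)\rtimes\GG$ together with the compatibility of the adjoint action with the canonical action on $\GG_0$ through $\omega$. The only mildly delicate point is to confirm, by matching the source/target/product formulas of the two translation groupoids, that the homomorphism produced by Lemma \ref{lem} agrees on the nose with $\omega_\GG$ rather than merely being isomorphic to it; this is immediate once the conventions are lined up.
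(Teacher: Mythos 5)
Your proof is correct and is essentially the paper's own argument: the paper simply states that the proposition is a direct consequence of Lemma \ref{lem}, and your write-up supplies exactly the intended instantiation ($\HH=\GG$, $\phi=\id_\GG$, $Y=\hom(K,\GG)$, $X=\GG_0$ with the canonical action, $f=\omega$), including the routine verifications the paper leaves implicit.
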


\begin{proof}
This is a direct consequence of Lemma \ref{lem}.
\end{proof}

\begin{theo}\label{lex}
Let $K$ be a locally compact Hausdorff topological group
and $(\GG,a,\rho)$ a $K$-pointed topological groupoid such that
$\omega:\GG^K\to\GG$ is a Serre fibration.
We have a natural long exact sequence
$$
\ldots\to\pi_n(\hom(K,\GG_a))\to\pi_n^K(\GG)
\stackrel{\pi_{n}(\omega)}{\to}\pi_n(\GG)
\stackrel{\partial}{\to}\pi_{n-1}(\hom(K,\GG_a))\to\ldots 
$$
\end{theo}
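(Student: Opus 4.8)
The plan is to deduce the sequence from the long exact sequence of a Serre fibration recorded as property (iv) above, applied to the homomorphism $\omega\colon\GG^K\to\GG$. First I would use Corollary \ref{pin} to identify $\pi_n^K(\GG,a,\rho)$ with $\pi_n(\GG^K,\rho)$, so that the asserted sequence becomes the homotopy sequence relating $\pi_n(\GG^K,\rho)$ and $\pi_n(\GG,a)$. Since $\omega$ is assumed to be a Serre fibration, the associated principal $\GG$-bundle $\langle\omega\rangle=\hom(K,\GG)\times_{\GG_0}\GG_1$ over $\GG^K$ is a Serre fibration. I would equip it with the section determined by the point $p=(\rho,1_a)$, which lies over the basepoint $\rho\in(\GG^K)_0$ and satisfies $\epsilon(p)=a$, making $(\langle\omega\rangle,p)$ a principal $(\GG,a)$-bundle over $(\GG^K,\rho)$. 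Property (iv) then yields a long exact sequence whose middle and right terms are $\pi_n(\GG^K,\rho)$ and $\pi_n(\GG,a)$, with the map between them induced by $\omega$, and whose fiber term is $\pi_n(\GG^K\ltimes\epsilon^{-1}(a))$.

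The substance of the argument is the identification of this fiber. Here $\epsilon^{-1}(a)=\{(\varphi,g):\varphi(\ast)=\trg(g),\ \src(g)=a\}$, i.e.\ pairs consisting of $\varphi\in\hom(K,\GG)$ and an arrow $g\colon a\to\varphi(\ast)$; unwinding the definition of $\GG^K=\hom(K,\GG)\rtimes\GG$ and of the left $\GG^K$-action on $\langle\omega\rangle$, an arrow of $\GG^K\ltimes\epsilon^{-1}(a)$ from $(\varphi,g)$ to $(\varphi',g')$ is an arrow $h\in\GG_1$ with $hg=g'$ and $\Ad_h\com\varphi=\varphi'$. I would then write down the functor
$$F\colon\GG^K\ltimes\epsilon^{-1}(a)\to\hom(K,\GG_a),\qquad F(\varphi,g)(k)=g^{-1}\varphi(k)g=(\Ad_{g^{-1}}\com\varphi)(k),$$
into the space $\hom(K,\GG_a)$ viewed as a unit groupoid. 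A short computation shows that $F$ is constant on arrows (so it is a well-defined functor), that it is surjective on objects with the continuous section $\chi\mapsto(\chi,1_a)$, and that between any two objects there is exactly one arrow when their images agree and none otherwise, since $h=g'g^{-1}$ is forced and then $\Ad_h\com\varphi=\varphi'$ follows from $\Ad_{g^{-1}}\com\varphi=\Ad_{g'^{-1}}\com\varphi'$. Hence $F$ is an essential equivalence and therefore induces a Morita equivalence, carrying the basepoint $(\rho,1_a)$ to $\rho\in\hom(K,\GG_a)$.

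Since homotopy groups are Morita invariants, $F$ induces isomorphisms $\pi_n(\GG^K\ltimes\epsilon^{-1}(a))\cong\pi_n(\hom(K,\GG_a))$ compatible with the maps in the sequence, and substituting these together with Corollary \ref{pin} into the sequence from property (iv) gives exactly the stated long exact sequence, with $\partial$ the transported connecting map. The main obstacle I anticipate is precisely this fiber identification: one must compute the translation groupoid $\GG^K\ltimes\epsilon^{-1}(a)$ correctly and verify that $F$ is an essential equivalence not merely as an abstract functor but topologically, i.e.\ that essential surjectivity admits continuous local sections and that full faithfulness is a pullback of spaces. This is where the local compactness and Hausdorffness of $K$ enter, guaranteeing that conjugation and evaluation are continuous for the compact-open topology on $\hom(K,\GG_a)$, so that the set-theoretic bijections above are in fact homeomorphisms.
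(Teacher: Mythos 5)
Your proof is correct and takes essentially the same route as the paper: both apply the long exact sequence of property (iv) to the Serre fibration $\omega:\GG^K\to\GG$ pointed at $(\rho,1_a)$, use Corollary \ref{pin} to replace $\pi_n(\GG^K,\rho)$ by $\pi_n^K(\GG)$, and identify the fiber $\GG^K\ltimes\epsilon^{-1}(a)$ with the space $\hom(K,\GG_a)$ up to Morita equivalence. The only (inessential) difference is the direction in which that equivalence is written: the paper uses the inclusion of the slice $\hom(K,\GG)\times_{\GG_0}\{1_a\}\cong\hom(K,\GG_a)$ into the fiber groupoid, justified by freeness of the $\GG^K$-action and the fact that the slice meets each orbit exactly once, which is precisely the inverse of your retraction functor $F(\varphi,g)=\Ad_{g^{-1}}\com\varphi$.
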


\begin{proof}
The long exact sequence is the one associated to
the Serre fibration $\omega:\GG^K\to\GG$.
This follows from Corollary \ref{pin} and
the fact that the fiber of $\langle\omega\rangle$ over $a$
is Morita equivalent to the space $\hom(K,\GG_a)$. Indeed,
we have
$$ \langle\omega\rangle=\hom(K,\GG)\times_{\GG_0}\GG_1\;,$$
so the fiber of $\langle\omega\rangle$ over $a$ is the
translation groupoid associated to the $\GG^K$-action on the
space
$$ F=\hom(K,\GG)\times_{\GG_0}\GG(a,\oo)\;,$$
where $\GG(a,\oo)=\src^{-1}(a)$ is the source-fiber over $a$ in $\GG$.
Now note that the
$\GG^K$-action on $F$ is free and the subspace
$\hom(K,\GG)\times_{\GG_0}\{1_a\}\cong \hom(K,\GG_a)$
of $F$ intersects each orbit
of $\GG^K$-action in exactly one point.
One can then see that the natural inclusion of the space
$\hom(K,\GG_a)$ into the fiber $\GG^K\ltimes F$
gives a Morita equivalence.
\end{proof}

It turns out that in concrete examples
it is often too optimistic to expect that
$\omega:\GG^K\to\GG$ is a Serre fibration,
but it may well happen that this homomorphism is a Serre fibration
when restricted to a path-component of the topological
groupoid $\GG^K$. (We say that a topological groupoid $\HH$ is
{\em path-connected} if $\pi_0(\HH)=1$. A path-component of
a topological groupoid $\HH$ is a maximal path-connected
(full) subgroupoid of $\HH$. The space of objects of
a path-component of $\HH$ is a minimal $\HH$-invariant union
of path-components of the space $\HH_0$.)
Therefore, let us introduce the following notation:
For a $K$-pointed path-connected topological groupoid
$(\GG,a,\rho)$, denote by $\GG^K_\rho$ the path-component of $\GG^K$
with $\rho\in(\GG^K_\rho)_0$, and write
$\hom(K,\GG)_\rho=(\GG^K_\rho)_0$. 
Denote also $\hom(K,\GG)_{a,\rho}=\hom(K,\GG_a)\cap\hom(K,\GG)_\rho$.
Lemma \ref{lem} now implies:

\begin{prop}
Let $K$ be a locally compact Hausdorff topological group and 
$(\GG,a,\rho)$ a $K$-pointed path-connected topological groupoid.
If $\omega:\hom(K,\GG)_\rho\to\GG_0$
is a Serre fibration, then $\omega:\GG^K_\rho\to\GG$ is a Serre fibration.
\end{prop}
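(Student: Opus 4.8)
The plan is to reduce the statement to Lemma \ref{lem}, exactly as in the proof of the unrestricted proposition above, after checking that passing to the path-component $\GG^K_\rho$ is compatible with the translation-groupoid description of $\GG^K$. First I would record the tautological identification $\GG\cong\GG_0\rtimes\GG$, where $\GG_0$ carries the right $\GG$-action with $\epsilon=\id_{\GG_0}$ and $x\cdot g=\src(g)$; under this identification the homomorphism $\omega:\GG^K\to\GG$ is precisely the map $\hom(K,\GG)\rtimes\GG\to\GG_0\rtimes\GG$ induced by the map $\omega:\hom(K,\GG)\to\GG_0$, which is $\GG$-equivariant for the adjoint action since $\omega(\varphi g)=(\varphi g)(\ast)=\src(g)=\omega(\varphi)\cdot g$.

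The key additional step is the identification $\GG^K_\rho=\hom(K,\GG)_\rho\rtimes\GG$. By definition $\hom(K,\GG)_\rho=(\GG^K_\rho)_0$ is a minimal $\GG^K$-invariant union of path-components of $\hom(K,\GG)$, and invariance under the arrows of the translation groupoid $\GG^K$ is exactly invariance under the adjoint $\GG$-action. Hence the adjoint action restricts to $\hom(K,\GG)_\rho$, and since $\GG^K_\rho$ is by definition the full subgroupoid of $\GG^K$ on the invariant object set $\hom(K,\GG)_\rho$, it coincides with the translation groupoid $\hom(K,\GG)_\rho\rtimes\GG$. Under this identification $\omega:\GG^K_\rho\to\GG$ becomes the map $\hom(K,\GG)_\rho\rtimes\GG\to\GG_0\rtimes\GG$ induced by the restriction $\omega|_{\hom(K,\GG)_\rho}$.

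With this in place I would invoke Lemma \ref{lem} with $\phi=\id_\GG$, $X=\GG_0$, $Y=\hom(K,\GG)_\rho$ and $f=\omega|_{\hom(K,\GG)_\rho}$. Here $(\trg,\id):\GG_1\to\GG_0\times_{\GG_0}\GG_1$ is an isomorphism, so in particular a surjective Serre fibration; $f$ is $\GG$-equivariant, being the restriction of the equivariant map $\omega$ to an invariant subspace; and $f$ is a Serre fibration by hypothesis. Lemma \ref{lem} then yields that the induced homomorphism $\hom(K,\GG)_\rho\rtimes\GG\to\GG_0\rtimes\GG$ is a Serre fibration, which is exactly the claim that $\omega:\GG^K_\rho\to\GG$ is a Serre fibration. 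The only genuinely non-routine point is the identification of $\GG^K_\rho$ with $\hom(K,\GG)_\rho\rtimes\GG$; once the path-component of $\GG^K$ is recognized as the translation groupoid of the restricted adjoint action, the remainder is a verbatim repetition of the unrestricted argument.
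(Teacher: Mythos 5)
Your proposal is correct and takes essentially the same route as the paper: the paper proves this proposition by simply invoking Lemma~\ref{lem} (the statement is prefaced by ``Lemma~\ref{lem} now implies''), applied with $\phi=\id_\GG$, $X=\GG_0$, $Y=\hom(K,\GG)_\rho$ and $f=\omega|_{\hom(K,\GG)_\rho}$, exactly as you do. Your explicit verifications --- that $\GG\cong\GG_0\rtimes\GG$, that $\hom(K,\GG)_\rho$ is invariant under the adjoint $\GG$-action so that $\GG^K_\rho=\hom(K,\GG)_\rho\rtimes\GG$, and that $(\trg,\id_\GG)$ is an isomorphism hence a surjective Serre fibration --- are precisely the details the paper leaves implicit.
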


\begin{theo}\label{lexcon}
Let $K$ be a locally compact Hausdorff topological group
and $(\GG,a,\rho)$ a $K$-pointed path-connected topological groupoid such
that $\omega:\GG^K_\rho\to\GG$ is a Serre fibration.
We have a natural long exact sequence
\begin{equation*}
\begin{split}
\ldots\to\pi_n(\hom(K,\GG_a))\to\pi_n^K(\GG)
\stackrel{\pi_{n}(\omega)}{\to}&\pi_n(\GG)
\stackrel{\partial}{\to}\pi_{n-1}(\hom(K,\GG_a))\to\ldots \\
\ldots\to\pi_1(\hom(K,\GG_a))\to\pi_1^K(\GG)
\stackrel{\pi_{1}(\omega)}{\to}&\pi_1(\GG)
\stackrel{\partial}{\to}\pi_{0}(\hom(K,\GG)_{a,\rho})\to 1\;.
\end{split}
\end{equation*}
\end{theo}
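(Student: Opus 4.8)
The plan is to run the argument of Theorem \ref{lex} after restricting every space and groupoid in sight to the path-component $\GG^K_\rho$, and then to extract the extra terminal information from the fact that the total space $\GG^K_\rho$ is path-connected. Since $\omega:\GG^K_\rho\to\GG$ is assumed to be a Serre fibration, property (iv) of Serre fibrations supplies a natural long exact sequence
$$\ldots\to\pi_n(\GG^K_\rho\ltimes F_\rho)\to\pi_n(\GG^K_\rho,\rho)\stackrel{\pi_n(\omega)}{\to}\pi_n(\GG)\stackrel{\partial}{\to}\pi_{n-1}(\GG^K_\rho\ltimes F_\rho)\to\ldots\;,$$
where $\GG^K_\rho\ltimes F_\rho$ denotes the fiber of $\langle\omega\rangle$ over $a$. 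The whole proof then reduces to identifying the three families of terms appearing here with those in the statement.

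First I would handle the middle terms. Corollary \ref{pin} gives $\pi_n^K(\GG,a,\rho)\cong\pi_n(\GG^K,\rho)$, and because $\GG^K_\rho$ is by definition the path-component of $\GG^K$ through $\rho$, the inclusion $\GG^K_\rho\hookrightarrow\GG^K$ induces isomorphisms $\pi_n(\GG^K_\rho,\rho)\cong\pi_n(\GG^K,\rho)$ for every $n\geq 1$. Hence $\pi_n^K(\GG)\cong\pi_n(\GG^K_\rho,\rho)$, as required.

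Next I would identify the fiber, copying the computation in Theorem \ref{lex} but keeping only the path-component. The fiber of $\langle\omega\rangle$ over $a$ is the translation groupoid of the free $\GG^K_\rho$-action on $F_\rho=\hom(K,\GG)_\rho\times_{\GG_0}\GG(a,\oo)$, and the subspace $\hom(K,\GG)_\rho\times_{\GG_0}\{1_a\}$ meets every orbit in exactly one point. A pair $(\varphi,1_a)$ lies in this subspace precisely when $\varphi(\ast)=a$, that is, when $\varphi\in\hom(K,\GG_a)$; together with $\varphi\in\hom(K,\GG)_\rho$ this identifies the subspace with $\hom(K,\GG_a)\cap\hom(K,\GG)_\rho=\hom(K,\GG)_{a,\rho}$. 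As in Theorem \ref{lex}, its inclusion into the fiber is a Morita equivalence, so $\pi_n(\GG^K_\rho\ltimes F_\rho)\cong\pi_n(\hom(K,\GG)_{a,\rho})$ for all $n$.

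At this stage the exact sequence reads with $\hom(K,\GG)_{a,\rho}$ in the fiber slot, so two final reconciliations remain, and these are where the care is needed. In positive degrees I would rewrite the fiber term using $\hom(K,\GG_a)$: the path-component of $\rho$ in the smaller space $\hom(K,\GG)_{a,\rho}$ coincides with its path-component in $\hom(K,\GG_a)$, since a path in $\hom(K,\GG_a)$ based at $\rho$ automatically stays inside $\hom(K,\GG)_\rho$ and hence inside $\hom(K,\GG)_{a,\rho}$, while the reverse inclusion is immediate; as homotopy groups in degree $\geq 1$ depend only on the path-component of the basepoint, this yields $\pi_n(\hom(K,\GG)_{a,\rho},\rho)\cong\pi_n(\hom(K,\GG_a),\rho)$ for $n\geq 1$. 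At the bottom of the sequence the map $\pi_0(\hom(K,\GG)_{a,\rho})\to\pi_0(\GG^K_\rho)$ is constant because $\GG^K_\rho$ is path-connected, so exactness there forces $\partial:\pi_1(\GG)\to\pi_0(\hom(K,\GG)_{a,\rho})$ to be surjective, which is exactly the terminal $\to 1$. The main obstacle is precisely this split behaviour of the fiber term: one may legitimately use the full space $\hom(K,\GG_a)$ in positive degrees only because it shares the path-component of $\rho$ with $\hom(K,\GG)_{a,\rho}$, whereas in degree $0$ the restricted space $\hom(K,\GG)_{a,\rho}$ must be retained for the sequence to remain exact and for the surjectivity of $\partial$ to hold.
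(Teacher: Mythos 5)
Your proof is correct and takes essentially the same approach as the paper: the paper's own proof of Theorem \ref{lexcon} consists precisely of the remark that one repeats the proof of Theorem \ref{lex} with $\GG^K$ replaced by $\GG^K_\rho$ and $\hom(K,\GG_a)$ replaced by $\hom(K,\GG)_{a,\rho}$, using $\pi_0(\GG^K_\rho)=1$, $\pi_n(\GG^K)=\pi_n(\GG^K_\rho)$ and $\pi_n(\hom(K,\GG)_{a,\rho})=\pi_n(\hom(K,\GG_a))$ for $n\geq 1$. Your write-up supplies exactly these three identifications in detail (the fiber computation restricted to the path-component, the equality of the path-components of $\rho$ in $\hom(K,\GG)_{a,\rho}$ and $\hom(K,\GG_a)$, and the surjectivity of $\partial$ at the bottom from path-connectedness of $\GG^K_\rho$), so it is a faithful expansion of the paper's argument.
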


\begin{proof}
The proof is analogous to the proof of Theorem \ref{lex},
with $\GG^K$ replaced by $\GG^K_\rho$ and
$\hom(K,\GG_a)$ replaced by $\hom(K,\GG)_{a,\rho}$.
We use that $\pi_0(\GG^K_\rho)=1$,
$\pi_n(\GG^K)=\pi_n(\GG^K_\rho)$ and
$\pi_n(\hom(K,\GG)_{a,\rho})=\pi_n(\hom(K,\GG_a))$
for any $n\geq 1$.
\end{proof}

\section{Obstruction to presentability of proper regular Lie groupoids}

\subsection{Normal representation of a Lie groupoid}
Recall that a {Lie groupoid} is a topological groupoid $\GG$
such that both $\GG_0$ and $\GG_1$ are smooth manifolds
and all the structure maps are smooth, with the source map being a submersion.
We also assume that $\GG_0$ and the fibers of the source map are
Hausdorff, but in general $\GG_1$ may not be Hausdorff.
Of course, we consider the smooth homomorphisms between Lie groupoids,
smooth actions of Lie groupoids
and smooth principal bundles. If $\GG$ and $\HH$ are Lie groupoids, then a smooth
principal $\GG$-bundle over $\HH$ is a principal $\GG$-bundle $P$ over $\HH$
such that $P$ is a smooth manifold, all the structure maps of $P$ are smooth,
the projection $P\to\HH_0$ is a surjective submersion and
the map $P\times_{\GG_0}\GG_1\to P\times_{\HH_0} P$, $(p,g)\mapsto(p,pg)$, is
a diffeomorphism. The (smooth) isomorphism classes of such smooth principal bundles
are morphisms in the (smooth) Morita
category of Lie groupoids (for details and examples,
see \cite{MoerdijkMrcun2003,MoerdijkMrcun2005}).
Smooth manifolds and Lie groups are examples of Lie groupoids, as are
the translation groupoids of smooth actions of Lie groupoids.
The isotropy groups of a Lie groupoid are Lie groups.

Recall that a Lie groupoid $\GG$ is {\em proper} if it is Hausdorff
and the map $(\src,\trg):\GG_1\to\GG_0\times\GG_0$ is proper.
A Lie groupoid $\GG$ is {\em regular} if all its isotropy groups have
the same dimension. A {\em foliation} Lie groupoid is a Lie groupoid
with discrete isotropy groups. All these notions are stable under  (smooth)
Morita equivalence
(see \cite{MoerdijkMrcun2003,MoerdijkMrcun2005,Weinstein}).
A Lie groupoid $\GG$ is {\em \'{e}tale} if its source map is a local diffeomorphism.
A Lie groupoid is a foliation groupoid if and only if it is
Morita equivalent to an \'{e}tale Lie groupoid.
Orbifolds can be viewed as proper foliation groupoids \cite{Moerdijk2002}.

Let $\GG$ be a Lie groupoid. For any $x\in \GG_0$
we denote by $\cO_x(\GG)=\GG x$ the orbit of $\GG$ through $x$.
The orbit is an immersed submanifold of $\GG_0$, with the smooth
structure given
by the principal $\GG_x$-bundle $\trg:\GG(x,\oo)\to\cO_x(\GG)$.
The normal space of $\GG$ at $x$ is the quotient
$\Nor_x\GG=\Tan_x \GG_0/\Tan_x\cO_x(\GG)$ of the tangent space of $\GG_0$ and
the tangent space of the orbit at $x$.
The union $\Nor\GG$ of all normal spaces $\Nor_x\GG$, $x\in\GG_0$, is
a topological family of vector spaces which is locally trivial
along an orbit, but not locally over $\GG_0$ in general, as dimension of
$\Nor_x\GG$ may vary. 
There is a continuous linear left $\GG$-action $\nu$ on $\Nor_x\GG$ along the natural
projection to $\GG_0$. For any $g\in\GG(x,x')$, the linear isomorphism
$$ \nu(g,\oo):\Nor_x\GG\to \Nor_{x'}\GG $$
is determined by the equality
$\nu(g,[(d_g\src)(v)])=[(d_g\trg)(v)]$ for any
$v\in\Tan_g\GG_1$ (for details, see \cite{EvensLuWeinstein}).
This action restricts to a Lie group representation
$$ \nu_x:\GG_x\to \GL(\Nor_x\GG) \;,$$
and the kernel
$$ \K_x(\GG)=\Ker(\nu_x) $$
is a closed Lie subgroup of $\GG_x$.
The union $\K(\GG)$ of $\K_x(\GG)$, $x\in \GG_0$, is a topological
family of Lie groups, a subfamily of the union $\I(\GG)$
of isotropy groups of $\GG$,
which is also a topological family of Lie groups.
The elements of $\K(\GG)$ are called the {\em ineffective} arrows of $\GG$.
Note that $\K(\GG)$ is normal in $\GG$, \textit{i.e.}\ for any
$g\in\GG(x,x')$ we have $\Ad_g(\K_x(\GG))=\K_{x'}(\GG)$, so
we have a left (adjoint) $\GG$-action on $\K(\GG)$ along the projection to $\GG_0$.
This follows from the equation
$\Ad_{\nu(g,\oo)}\com\nu_x=\nu_{x'}\com\Ad_g$.

It is important to note that the normal family $\Nor\GG$, the
normal action $\nu$, the isotropy family $\I(\GG)$ and
the ineffective isotropy family $\K(\GG)$
all behave well with respect to (smooth) Morita morphisms.
More precisely, let $P$ be a (smooth) principal $\GG$-bundle over a Lie groupoid $\HH$.
Any $p\in P$ defines a linear map
$d_pP:\Nor_{\pi(p)}\HH\to\Nor_{\epsilon(p)}\GG$, determined by
the equality $d_pP([(d_p\pi)(v)])=[(d_p\epsilon)(v)]$ for any
$v\in\Tan_pP$. One can check that
$\nu_{\epsilon(p)}(P_p(h))\com  d_pP
= d_pP\com\nu_{\pi(p)}(h)$ for any $h\in \HH_{\pi(p)}$.
If $P$ is a Morita equivalence, the maps
$P_p$ and $d_pP$ are isomorphisms and
$P_p(\K_{\pi(p)}(\HH))=\K_{\epsilon(p)}(\GG)$.

If $\GG$ is regular, then the connected
components of the orbits of $\GG$ define a regular foliation $\cF(\GG)$
on $M$, and $\Nor\GG$ is a (locally trivial) smooth vector bundle on $\GG_0$ 
equal to the normal bundle of the foliation $\cF(\GG)$.
The action $\nu$ of $\GG$ on $\Nor\GG$ is then a (smooth) representation
of the Lie groupoid $\GG$.

If $\GG$ is regular and proper, then all the isotropy groups
and all the ineffective isotropy groups are compact. However,
this does not imply that the isotropy bundle $\I(\GG)$ is a locally trivial
bundle of Lie groups. It turns out, on the other hand, that
$\K(\GG)$ is, at least if $\GG$ is connected. In a different way,
this was proved in \cite{Moerdijk2003} for the bundle $\K(\GG)^{\circ}$, which
is the component of the trivial section in the bundle of $\K(\GG)$.

\begin{prop}
If $\GG$ is a connected proper regular Lie groupoid, then the ineffective isotropy
bundle $\K(\GG)$ is a locally trivial bundle of Lie groups.
\end{prop}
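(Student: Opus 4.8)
The plan is to reduce the statement to a direct computation for a compact-group action groupoid, using that $\K$ and the property of local triviality are both Morita invariant, together with the linearization of proper Lie groupoids near an orbit. Since local triviality is a local property, I would fix $x_0\in\GG_0$ and exhibit a neighbourhood of $x_0$ over which $\K(\GG)$ is a trivial bundle of Lie groups.

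First I would observe that local triviality of $\K$ transfers across Morita equivalences. If $P$ is a (smooth) principal $\GG$-bundle over $\HH$ realizing a Morita equivalence, then the isomorphisms $P_p:\HH_{\pi(p)}\to\GG_{\epsilon(p)}$ recorded above satisfy $P_p(\K_{\pi(p)}(\HH))=\K_{\epsilon(p)}(\GG)$, so the pullback $\epsilon^{*}\K(\GG)$ over $P$ is isomorphic to $\pi^{*}\K(\HH)$ as a family of Lie groups. Hence if $\K(\HH)$ is locally trivial, so is $\epsilon^{*}\K(\GG)$, and since $\epsilon:P\to\GG_0$ is a surjective submersion it admits local sections; restricting $\epsilon^{*}\K(\GG)$ along such a section exhibits $\K(\GG)$ as locally trivial. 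By the linearization theorem for proper Lie groupoids, a saturated neighbourhood $U$ of the orbit $\cO_{x_0}(\GG)$ gives a groupoid $\GG|_U$ Morita equivalent to the action groupoid $\GG_{x_0}\ltimes\Nor_{x_0}\GG$, where the compact isotropy group $\GG_{x_0}$ acts linearly on $V:=\Nor_{x_0}\GG$ through the normal representation $\nu_{x_0}$. Since restricting to an open subgroupoid changes neither the isotropy groups nor the normal representation, $\K(\GG)|_U=\K(\GG|_U)$, so it suffices to prove the claim for this action groupoid.

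The key point is then the effect of regularity on this local model. Writing $H=\GG_{x_0}$, the isotropy group of $H\ltimes V$ at $v\in V$ is the stabilizer $H_v$, which is isomorphic to the isotropy group of $\GG$ at the corresponding point; by regularity $\dim H_v=\dim H$, so $H_v$ is open in $H$ and contains the identity component $H^{\circ}$. As the action is linear and $H^{\circ}$ then fixes a neighbourhood of $0$, the whole component $H^{\circ}$ acts trivially on $V$, and every $H$-orbit in $V$ is $0$-dimensional. Consequently the orbit tangent space vanishes and the normal space of $H\ltimes V$ at $v$ is all of $V$, with $\nu_v:H_v\to\GL(V)$ equal to the restriction of the linear action. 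Its kernel is $\{h\in H_v:h|_V=\id\}=\Ker(H\to\GL(V))$, since any $h$ acting trivially on $V$ already fixes $v$; in particular this kernel is independent of $v$. Thus $\K(H\ltimes V)=V\times N$ with $N=\Ker(H\to\GL(V))$ is a trivial bundle of Lie groups.

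Combining the two steps, $\K(\GG)$ is locally trivial near $x_0$, and since $x_0$ was arbitrary the proposition follows; connectedness of $\GG$ then guarantees that the fibre type is constant over all of $\GG_0$. I expect the main obstacle to be the first reduction: invoking linearization and verifying that local triviality is genuinely Morita invariant, rather than merely fibrewise preservation of $\K$. The computation in the local model is essentially forced by regularity, and it is precisely the constancy of the full kernel $N$—not only of its identity component—that upgrades the result on $\K(\GG)^{\circ}$ to the whole family $\K(\GG)$.
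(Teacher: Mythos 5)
Your strategy --- Morita/restriction invariance of $\K$ plus an explicit computation in the linearized local model --- is exactly what the paper's one-line proof (a direct citation of local linearizability, via Weinstein and Crainic--Struchiner) leaves implicit, so the approach is essentially the same. Moreover, two of your three steps are correct and complete: the transfer of local triviality of $\K$ across a Morita equivalence via the smooth family of isomorphisms $p\mapsto P_p$ together with local sections of $\epsilon$, and the model computation showing that regularity forces $H^{\circ}$ to act trivially, the orbits to be discrete, $\nu_v$ to be the restriction of the linear $H$-action, and hence $\K_v=\Ker(H\to\GL(V))$ for \emph{every} $v$ --- which is indeed the point that upgrades Moerdijk's statement about $\K(\GG)^{\circ}$ to all of $\K(\GG)$.

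The gap is in the middle step: you invoke an \emph{invariant} linearization (``a saturated neighbourhood $U$ of the orbit gives $\GG|_U$ Morita equivalent to $\GG_{x_0}\ltimes\Nor_{x_0}\GG$''). For groupoids that are merely proper this is false in general: invariant linearizability requires extra hypotheses (such as $s$-properness), and the theorems you cite only provide the non-invariant statement, namely opens $U\subset\GG_0$ and $V$ in the model together with an isomorphism
$$
\GG|_U\;\cong\;\cN_{\cO}(\GG)|_V\;,\qquad
\cN_{\cO}(\GG)=\GG|_{\cO_{x_0}}\ltimes\bigl(\src^{-1}(x_0)\times_{\GG_{x_0}}\Nor_{x_0}\GG\bigr)\;,
$$
which is also the form used in the paper's proof of Proposition \ref{propprg}. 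Fortunately the repair costs nothing you do not already have: $\K$ commutes with full open restrictions (your own observation), the full model $\cN_{\cO}(\GG)$ is genuinely Morita equivalent to $\GG_{x_0}\ltimes\Nor_{x_0}\GG$ by the associated-bundle equivalence (no linearization needed), and in the linear model regularity now only gives $\dim H_v=\dim H$ for $v$ in the open set corresponding to $U$, i.e.\ near $0$ --- but linearity gives $H_v=H_{\lambda v}$ for $\lambda>0$, so $H^{\circ}$ fixing a neighbourhood of $0$ fixes all of $V$, a step you in fact wrote down. With the chain $\K(\GG)|_U=\K(\GG|_U)\cong\K(\cN_{\cO}(\GG))|_V$ and local triviality of $\K(\cN_{\cO}(\GG))$ obtained from the model via your Morita-invariance step, the proof closes.
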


\begin{proof}
This is a direct consequence of the local
linearizability of proper effective
Lie groupoids \cite{CrainicStruchiner,Weinstein}.
\end{proof}

\subsection{Homotopy sequence of a proper regular Lie groupoid}

\begin{prop}\label{propprg}
Let $K$ be a compact Lie group,
let $\GG$ be a connected proper regular Lie groupoid
with a basepoint $a$ and let $\rho$ be a $K$-basegroup
in $\GG$ with image in $\K_a(\GG)$.

(i)
Any element $\varphi\in \hom(K,\GG)_\rho$ has its image
in $\K(\GG)$. If $\rho:K\to\K_a(\GG)$ is an isomorphism,
then $\varphi:K\to\K_{\varphi(\ast)}(\GG)$ is also an isomorphism.

(ii)
The map $\omega:\hom(K,\GG)_\rho\to \GG_0$  and the homomorphism
$\omega:\GG^K_\rho\to \GG$ are Serre fibrations. 
\end{prop}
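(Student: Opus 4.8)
The plan is to establish (i) by combining the normality of $\K(\GG)$ with the rigidity of representations of the compact group $K$, and then to read off (ii) from the local triviality of $\K(\GG)$. Recall that $\hom(K,\GG)_\rho$ is the minimal $\GG^K$-invariant union of path-components of $\hom(K,\GG)$ containing $\rho$; it is thus obtained from $\rho$ by alternately passing to path-components and applying the adjoint $\GG$-action $\varphi\mapsto\varphi g=\Ad_{g^{-1}}\com\varphi$. The property that $\mathrm{im}\,\varphi\subset\K(\GG)$ is preserved by the adjoint action, since $\Ad_g(\K_x(\GG))=\K_{x'}(\GG)$ by normality of $\K(\GG)$, so it suffices to prove this property invariant along paths in $\hom(K,\GG)$. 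Given a path $t\mapsto\varphi_t$ with $x_t=\varphi_t(\ast)$ and $\mathrm{im}\,\varphi_0\subset\K(\GG)$, I would trivialize the normal bundle $\Nor\GG$ (locally trivial, as $\GG$ is regular) along $t\mapsto x_t$ and view $r_t=\nu_{x_t}\com\varphi_t$ as a continuous path of representations of $K$ on the fixed space $V=\Nor_a\GG$. The dimension $\dim V^{r_t}=\int_K\chi_{r_t}(k)\,dk$ of the fixed subspace (the trace of the averaging projection $\int_K r_t(k)\,dk$) is an integer varying continuously with $t$, hence constant; since $r_0$ is trivial it equals $\dim V$ throughout, so every $r_t$ is trivial, i.e.\ $\varphi_t(k)\in\Ker\nu_{x_t}=\K_{x_t}(\GG)$. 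This proves the first claim of (i).

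For the second claim the reduction is the same: conjugation by $g$ carries an isomorphism $K\to\K_x(\GG)$ to an isomorphism $K\to\K_{x'}(\GG)$, so again it is enough to treat a path $t\mapsto\varphi_t$. By the preceding proposition, $\K(\GG)$ is a locally trivial bundle of compact Lie groups, so its pullback along $t\mapsto x_t$ is trivial; a trivialization gives continuous isomorphisms $\K_{x_t}(\GG)\cong G_0:=\K_a(\GG)$ and hence a path $\psi_t:K\to G_0$ of homomorphisms with $\psi_0=\rho$. By the rigidity of homomorphisms between compact Lie groups, homomorphisms lying in one path-component of the homomorphism space are conjugate in $G_0$, so $\psi_1=c_g\com\rho$; if $\rho$ is an isomorphism then so is $\psi_1$, and transporting back through the trivialization shows that $\varphi:K\to\K_{\varphi(\ast)}(\GG)$ is an isomorphism.

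For (ii), part (i) shows that $\hom(K,\GG)_\rho$ lies in the space of $K$-basegroups with image in $\K(\GG)$. Over any chart $U\subset\GG_0$ on which $\K(\GG)$ is trivial, say $\K(\GG)|_U\cong U\times G_0$, this space is $U\times\hom(K,G_0)$ with $\omega$ the first projection; intersecting with $\hom(K,\GG)_\rho$ and using that $U$ is connected yields $\omega^{-1}(U)\cap\hom(K,\GG)_\rho=U\times C$ for a union $C$ of path-components of $\hom(K,G_0)$. The same description shows that the image $B'=\omega(\hom(K,\GG)_\rho)$ is open and closed in $\GG_0$. Thus $\omega:\hom(K,\GG)_\rho\to B'$ is a locally trivial fiber bundle over a manifold, hence a Serre fibration; and since $B'$ is clopen, any homotopy of a cube into $\GG_0$ that lifts at time $0$ takes values in $B'$, so $\omega:\hom(K,\GG)_\rho\to\GG_0$ is a Serre fibration as well. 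Finally, $\omega:\GG^K_\rho\to\GG$ is a Serre fibration by the proposition stated just before Theorem~\ref{lexcon}.

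The crux is the rigidity underlying (i): one must exclude that a continuous family of representations, respectively homomorphisms, of $K$ changes its isomorphism, respectively conjugacy, class. This is exactly where the compactness of $K$ is indispensable—complete reducibility and the integrality of the relevant multiplicities (here the fixed-point dimension) force the family to be locally, hence globally, constant. Granting this, (ii) reduces to the routine fact that a map which is locally a projection with fixed fiber is a Serre fibration, together with the earlier proposition that promotes the statement from the object map to the homomorphism $\omega:\GG^K_\rho\to\GG$.
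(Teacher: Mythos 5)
Your proof is correct, but it reaches part (i) by a genuinely different route than the paper. For the first claim of (i), the paper covers the path by charts on which $\GG$ is linearizable (invoking the linearization theorem for proper Lie groupoids), lifts the path through the principal $\GG_a$-bundle of the linear local model, and then applies the conjugacy rigidity of continuous families of homomorphisms into the compact group $\GG_a$ (\cite[Lemma 38.1]{ConnerFloyd}). You instead work directly with the normal representation $\nu$: along a path $\varphi_t$ the fixed-space dimension $\int_K\chi_{\nu_{x_t}\com\varphi_t}(k)\,dk$ is a continuous integer-valued function of $t$, hence constant, so triviality of $\nu_{x_0}\com\varphi_0$ propagates along the path. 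This is more elementary, since it uses only regularity of $\GG$ (local triviality of $\Nor\GG$) and compactness of $K$, and avoids the linearization theorem entirely for this step. For the second claim of (i) you trivialize $\K(\GG)$ along the path, using the preceding proposition of the paper (which is where properness and linearization re-enter), and then invoke the same rigidity lemma inside the fixed compact group $\K_a(\GG)$; the paper instead extracts both claims at once from the linearized model. Your part (ii) follows the paper's argument, and is in fact more careful on a point the paper glosses over: why the restriction of the locally trivial bundle $\hom(K,\K(\GG))\to\GG_0$ to the union of path-components $\hom(K,\GG)_\rho$ remains a Serre fibration over all of $\GG_0$ --- your connected-chart description and clopen-image argument supply exactly this. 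Two details you should make explicit: the rigidity fact you invoke is precisely \cite[Lemma 38.1]{ConnerFloyd}, and in the second claim of (i) the path $\psi_t$ is well defined only because the first claim already places the image of each $\varphi_t$ in $\K_{x_t}(\GG)$.
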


\begin{proof}
(i) 
Recall that the space $\hom(K,\GG)_\rho$ is a minimal $\GG$-invariant union
of path-components of the space $\hom(K,\GG)$. The bundle
$\K(\GG)$ is normal in $\GG$, and therefore it is sufficient to show
that any $\varphi$, which is in the same path-component of the space
$\hom(K,\GG)$ as $\rho$, has in fact image in $\K(\GG)$.
In other words, if $\gamma$ is a path in $\hom(K,\GG)$ starting
at $\rho$, we have to show that the image of $\gamma(t)$
is in $\K(\GG)$ for any $t\in [0,1]$.
The path $\gamma$ gives us a path $\omega\com\gamma$ in $\GG_0$.
Since $\GG$ is proper and hence locally linearizable
\cite{CrainicStruchiner,Weinstein}, we may cover
the image of $\omega\com\gamma$ with finitely many open subsets
of $\GG_0$ on which the groupoid $\GG$ is linearizable.
We may therefore assume, without loss of generality, that
the image of $\omega\com\gamma$ lies in an open subset $U$
of $\GG_0$ such that the restriction of $\GG$ to $U$ is
linearizable. Explicitly, this means that $\GG|_U$ is isomorphic to
the restriction of the translation groupoid
$$ \cN_{\cO}(\GG)=\GG|_{\cO_a}\ltimes(\src^{-1}(a)\times_{\GG_a}\Nor_a(\GG)) $$
to an open subset $V$ of $\src^{-1}(a)\times_{\GG_a}\Nor_a(\GG)$,
where $\GG|_{\cO_a}$ is the transitive groupoid obtained as
the restriction of $\GG$ to the orbit $\cO_a$ of $\GG$
through $a$. The isomorphism $\phi$ from $\GG|_U$ to
$\cN_{\cO}(\GG)|_{V}$ is the canonical one when restricted
to the orbit $\cO_a$.

The quotient map
$q:\src^{-1}(a)\times\Nor_a(\GG)\to\src^{-1}(a)\times_{\GG_a}\Nor_a(\GG)$
is a principal $\GG_a$-bundle,
so we can lift the path $\phi\com\omega\com\gamma$ to a path
$\tilde{\gamma}$ in $\src^{-1}(a)\times\Nor_a(\GG)$.
We write $\tilde{\gamma}(t)=(g_t,v_t)$
and put $\alpha(t)=\Ad_{g_{t}^{-1}}\com\phi_\ast(\gamma(t))$.
Note that $\alpha(t)$ is a homomorphism from $K$ to
the isotropy group of $\cN_{\cO}(\GG)$ at $q(1_a,v_t)$,
which is a subgroup of $\GG_a$.
Since $\alpha(t)$ depends continuously on $t$,
there exist $h_t\in\GG_a$ such that
$\alpha(t)=\Ad_{h_t}\com\alpha(0)$ 
(see \cite[Lemma 38.1]{ConnerFloyd}).
It follows that
$$ \alpha(t)=\Ad_{h_t}\com\Ad_{g_{0}^{-1}}\com\phi_\ast(\gamma(0))
   =\Ad_{h_t g_{0}^{-1}}\com\phi_\ast(\rho) $$
and this implies the first part of the statement.
The second part of the statement follows along the same lines.

(ii) We know that $\K(\GG)$ is a locally
trivial bundle of Lie groups over $\GG_0$, so
$\omega:\hom(K,\K(\GG))\to\GG_0$ is also a locally trivial bundle
and therefore a Serre fibration. The same is true for the
restriction of $\omega$ to a $\GG$-invariant union of path-connected
components of $\hom(K,\K(\GG))$. 
On the other hand, the space
$\hom(K,\K(\GG))$ is a union of path-connected components of
$\hom(K,\GG)$ by (i). 
\end{proof}

As a consequence of Proposition \ref{propprg},
we can apply Theorem \ref{lexcon}, to obtain a long
exact sequence of homotopy groups for any
proper regular $K$-pointed Lie groupoid $(\GG,a,\rho)$
such that $\rho(K)\subset\K_a(\GG)$. However, if
$\rho$ is an isomorphism onto $\K_a(\GG)$, the long exact sequence
could be written in a simpler form:

\begin{theo}\label{lexprl}
Let $K$ be a compact Lie group,
let $\GG$ be a connected proper regular Lie groupoid
with basepoint $a$ and let
$\rho:K\to\K_a(\GG)$ be an isomorphism. We have a long exact sequence
\begin{equation*}
\begin{split}
\ldots\to\pi_n(\Aut(\K_a(\GG)))\to\pi_n^K(\GG)
\stackrel{\pi_{n}(\omega)}{\to}&\pi_n(\GG)
\stackrel{\partial}{\to}\pi_{n-1}(\Aut(\K_a(\GG)))\to\ldots \\
\ldots\to\pi_1(\Aut(\K_a(\GG)))\to\pi_1^K(\GG)
\stackrel{\pi_{1}(\omega)}{\to}&\pi_1(\GG)
\stackrel{\partial}{\to}\pi_{0}(\Aut(\K_a(\GG)))
\end{split}
\end{equation*}
which is natural with respect to Morita equivalences.
\end{theo}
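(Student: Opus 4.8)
The plan is to obtain this sequence directly from Theorem \ref{lexcon} by identifying each term $\pi_n(\hom(K,\GG_a))$ with $\pi_n(\Aut(\K_a(\GG)))$, exploiting the hypothesis that $\rho$ is an isomorphism. By Proposition \ref{propprg}(ii) the homomorphism $\omega:\GG^K_\rho\to\GG$ is a Serre fibration, so Theorem \ref{lexcon} applies to $(\GG,a,\rho)$ and produces the exact sequence with $\pi_n(\hom(K,\GG_a))$ in place of $\pi_n(\Aut(\K_a(\GG)))$ for $n\geq 1$ and with final term $\pi_0(\hom(K,\GG)_{a,\rho})$; it remains to rewrite these two kinds of terms.

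For the identification I would use the homeomorphism
$$
\Phi:\Aut(\K_a(\GG))\to\mathrm{Iso}(K,\K_a(\GG)),\qquad\theta\mapsto\theta\com\rho,
$$
onto the space of isomorphisms $K\to\K_a(\GG)$ (with the compact-open topology); its inverse is $\varphi\mapsto\varphi\com\rho^{-1}$, and both are continuous since $\rho,\rho^{-1}$ are fixed. By Proposition \ref{propprg}(i) every element of $\hom(K,\GG_a)$ in the path-component of $\rho$ is an isomorphism onto $\K_a(\GG)$; hence a path in $\hom(K,\GG_a)$ issuing from $\rho$ stays in $\mathrm{Iso}(K,\K_a(\GG))$, and the path-component of $\rho$ in $\hom(K,\GG_a)$ coincides with its path-component in $\mathrm{Iso}(K,\K_a(\GG))$, which under $\Phi$ is the identity component $\Aut(\K_a(\GG))^{\circ}$. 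As homotopy groups in positive degree depend only on the identity component, this yields $\pi_n(\hom(K,\GG_a))\cong\pi_n(\Aut(\K_a(\GG)))$ for every $n\geq 1$, settling all terms except the last.

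For the final term I would observe that the inclusion $\hom(K,\GG)_{a,\rho}\subseteq\mathrm{Iso}(K,\K_a(\GG))$ (again valid by Proposition \ref{propprg}(i)) induces, through $\Phi$, an injection $\pi_0(\hom(K,\GG)_{a,\rho})\hookrightarrow\pi_0(\Aut(\K_a(\GG)))$. In Theorem \ref{lexcon} the map $\partial:\pi_1(\GG)\to\pi_0(\hom(K,\GG)_{a,\rho})$ is surjective; composing it with this injection enlarges the target to $\pi_0(\Aut(\K_a(\GG)))$ and gives precisely the stated sequence, which now terminates at $\pi_0(\Aut(\K_a(\GG)))$ without a surjectivity claim. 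The resulting boundary map is the monodromy map sending the class of a loop in $\GG$ to the component of the endpoint of its $\omega$-lift.

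Naturality with respect to Morita equivalences follows from the Morita invariance of the ingredients: $\pi_n^K$ and $\pi_n$ are (homotopy) Morita invariants, and a Morita equivalence $P$ taking $\upsilon$ to $\rho$ induces via the isomorphisms $P_p$ a compatible isomorphism $\K_b(\HH)\cong\K_a(\GG)$, hence $\Aut(\K_b(\HH))\cong\Aut(\K_a(\GG))$, intertwining the two sequences. The step I expect to require the most care is the last: checking that under $\Phi$ the path-components of $\Aut(\K_a(\GG))$ meeting $\hom(K,\GG)_{a,\rho}$ are exactly the image of $\partial$, and that $\Phi$ is natural enough to identify the connecting homomorphisms of the two long exact sequences.
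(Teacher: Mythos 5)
Your proposal is correct and follows essentially the same route as the paper: the paper's proof is exactly ``apply Theorem \ref{lexcon}, whose hypothesis is supplied by Proposition \ref{propprg}(ii), and then use Proposition \ref{propprg}(i) together with the isomorphism $\rho$ to identify the space of isomorphisms from $K$ to $\K_a(\GG)$ with $\Aut(\K_a(\GG))$.'' The details you supply (that the path-component of $\rho$ in $\hom(K,\GG_a)$ lands in the isomorphisms and corresponds to the identity component of $\Aut(\K_a(\GG))$, that $\pi_0(\hom(K,\GG)_{a,\rho})$ injects into $\pi_0(\Aut(\K_a(\GG)))$ so exactness at $\pi_1(\GG)$ survives enlarging the target, and the Morita-naturality via $P_p$) are precisely what the paper leaves implicit in its accompanying remark.
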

\Rem
Here we used the isomorphism $\rho$ to identify
the space of isomorphisms from $K$ to $\K_a(\GG)$ with
$\Aut(\K_a(\GG))$.
Note that
$\pi_n(\Aut(\K_a(\GG)),\id)=\pi_n(\Aut(\K_a(\GG)))
=\pi_n(\Inn(\K_a(\GG)))=\pi_n(\K_a(\GG)/\Centralizer(\K_a(\GG)))$
for $n\geq 1$.
The sequence ends with the homomorphism
$$ \partial:\pi_1(\GG,a)\to\pi_{0}(\Aut(\K_a(\GG)))\;, $$
which we call the {\em monodromy map}. This homomorphism
does not depend on the choice of $\rho$, thus it is a
Morita invariant of $(\GG,a)$.
To understand the monodromy map geometrically,
one should represent an element of $\pi_1(\GG,a)$ by
a $\GG$-cocycle on $I$, \textit{i.e.}\ a series of paths in $\GG_0$
connected by arrows of $\GG$ (see \cite{MoerdijkMrcun2005}).
Then the monodromy is obtained by a combination of lifts
along the paths and adjoint actions of the connecting arrows.

\begin{proof}
This follows from Theorem \ref{lexcon}
and Proposition \ref{propprg}.
\end{proof}

\begin{ex}\rm
Let $\GG$ be a connected proper \'{e}tale Lie groupoid.
Any such groupoid is regular, has discrete orbits
and discrete isotropy groups. We have the associated
effect homomorphism $\GG\to\Eff(\GG)$ \cite{MoerdijkMrcun2003}
and $\K(\GG)$ is exactly its kernel.
By Theorem \ref{lexprl}
we have the long exact sequence
$$
\ldots\to\pi_n(\Aut(\K_a(\GG)))\to\pi_n^K(\GG)
\stackrel{\pi_{n}(\omega)}{\to}\pi_n(\GG)
\stackrel{\partial}{\to}\pi_{n-1}(\Aut(\K_a(\GG)))\to\ldots 
$$
for a choice of a point $a\in\GG_0$ and
an isomorphism of Lie groups $\rho:K\to\K_a(\GG)$.
Since the isotropy groups of $\GG$ are discrete,
the space $\Aut(\K_a(\GG))$ is discrete as well. The long exact sequence thus
implies that
$$ \pi_{n}(\omega):\pi_n^K(\GG)\to \pi_n(\GG) $$
is an isomorphism for any $n\geq 2$ and that we have an exact sequence
$$
1\to\pi_1^K(\GG)
\stackrel{\pi_{1}(\omega)}{\to}\pi_1(\GG)
\stackrel{\partial}{\to}\Aut(\K_a(\GG))\;. 
$$
\end{ex}

\begin{ex}\label{exLG}\rm
Let $G$ be a connected compact Lie group acting on a connected manifold
$M$, and assume that this action is regular, \textit{i.e.}\ that
the isotropy groups of this action have constant dimension.
Therefore, the associated translation groupoid
$G\ltimes M$ is regular and proper.
The isotropy group $G_x=(G\ltimes M)_x$
is a subgroup of $G$, for any $x\in M$.
Choose a basepoint $x\in M$, write $\K_x=\K_x(G\ltimes M)$
and let $\rho:K\to\K_x$ be
an isomorphism of Lie groups.
Note that $\K_x$ is a normal subgroup of $G_x$
and in particular a subgroup of $G$.
By Theorem \ref{lexprl}
we have the long exact sequence
$$
\ldots\to\pi_n(\Aut(\K_x))\to\pi_n^K(G\ltimes M)
\stackrel{\pi_{n}(\omega)}{\to}\pi_n(G\ltimes M)
\stackrel{\partial}{\to}\pi_{n-1}(\Aut(\K_x))\to\ldots 
$$
This sequence ends with the monodromy map
$$ \partial:\pi_1(G\ltimes M,x)\to\pi_{0}(\Aut(\K_x))\;.$$
Note that, by construction and by
\cite[Lemma 38.1]{ConnerFloyd},
we have $\partial(\pi_1(G\ltimes M,x))\subset\pi_0(\W_G(\K_x))$,
where $\W_G(\K_x)$  
is the subgroup of $\Aut(\K_x)$ consisting
of all automorphisms of $\K_x$ which are the restriction of
an inner automorphism of $G$. The group $\W_G(\K_x)$ 
is isomorphic to the quotient
$\Normalizer_G(\K_x)/\Centralizer_G(\K_x)$
of the normalizer of $\K_x$ in $G$
and the centralizer of $\K_x$ in $G$.

We may choose a faithful unitary representation of the
compact Lie group $G$.
This representation restricts to a faithful unitary
representation $r:\K_x\to U(m)$ of $\K_x$.
We write $\W_{r}(\K_x)$ for the subgroup of
$\Aut(\K_x)$ which is identified with the group
$\W_{\U(m)}(r(\K_x))$ by $r$.
Note that $\W_G(\K_x)\subset\W_{r}(\K_x)$, so
$\partial(\pi_1(G\ltimes M,x))\subset\pi_0(\W_G(\K_x))$
implies
$$ \partial(\pi_1(G\ltimes M,x))\subset\pi_0(\W_r(\K_x))\;.$$
\end{ex}

\subsection{Obstruction to presentability}
For a given proper Lie groupoid $\GG$ it is a natural question
to ask whether it is presentable as a global quotient,
or more precisely, whether it is
Morita equivalent to the translation groupoid
associated to an action of a compact connected Lie group.
It is well-known that any effective \'{e}tale proper Lie groupoid
is Morita equivalent to the translation groupoid of an action
of a unitary group on a manifold (such groupoids represent effective
orbifolds).
On the other hand,
there exist particular examples of connected proper regular
Lie groupoids which are not presentable in this way,
while it is not known yet
if any proper \'{e}tale Lie groupoid is presentable.

Let $\GG$ be a connected proper regular Lie groupoid
with basepoint $a\in \GG_0$. The associated
monodromy map
$$ \partial:\pi_1(\GG,a)\to\pi_0(\Aut(\K_a(\GG)))$$
is natural with respect to Morita equivalences.
In particular, if $\GG$ is Morita equivalent to
the translation groupoid $G\ltimes M$ of an action of
a connected compact Lie group $G$ on a connected manifold $M$,
then the groupoid $G\ltimes M$ is regular
and we have the commutative diagram
$$
\xymatrix{
\pi_1(\GG,a) \ar[d]_{\pi_1(P)} \ar[r]^-\partial & 
             \pi_0(\Aut(\K_a(\GG))) \ar[d]^-{\pi_0(\Ad_{P_p})}\\
\pi_1(G\ltimes M,x)   \ar[r]^-\partial  & \pi_0(\Aut(\K_x))
}
$$
where $P$ is a Morita equivalence from $\GG$ to $G\ltimes M$,
$p$ is a point in $P$ such that $a=\pi(p)$, $x=\epsilon(p)$
and $\K_x=\K_x(G\ltimes M)\subset G_x\subset G$.
By Example \ref{exLG} we know that
$\partial(\pi_1(G\ltimes M,x))\subset \pi_0(\W_G(\K_x))$, and
we obtain the following obstruction to presentability:

\begin{theo}\label{obstruction}
Let $\GG$ be a connected proper regular Lie groupoid
with basepoint $a\in \GG_0$. If $\GG$ is Morita equivalent
to the translation groupoid associated to an action of a connected
compact Lie group on a smooth manifold, then there exists
a faithful unitary representation $r:\K_a(\GG)\to \U(m)$
such that $\partial(\pi_1(\GG,a))\subset \pi_0(\W_{r}(\K_a(\GG)))$.
\end{theo}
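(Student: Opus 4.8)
The plan is to transport the faithful unitary representation coming from the global-quotient model back to $\K_a(\GG)$ along the Morita equivalence, and then to check that this transport is compatible with the Weyl-type subgroups appearing in Example~\ref{exLG}. So suppose $P$ is a Morita equivalence from $\GG$ to a translation groupoid $G\ltimes M$ with $G$ a connected compact Lie group, and fix $p\in P$ with $\pi(p)=a$ and $\epsilon(p)=x$. Since $P$ is a Morita equivalence, the induced homomorphism $P_p$ restricts to an isomorphism of compact Lie groups $P_p:\K_a(\GG)\to\K_x$, where $\K_x=\K_x(G\ltimes M)\subset G_x\subset G$. Example~\ref{exLG} supplies a faithful unitary representation $r_x:\K_x\to\U(m)$, namely the restriction of a faithful unitary representation of $G$, together with the inclusion $\partial(\pi_1(G\ltimes M,x))\subset\pi_0(\W_{r_x}(\K_x))$. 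I would then define the candidate representation of $\K_a(\GG)$ as the composite $r=r_x\com P_p:\K_a(\GG)\to\U(m)$, which is again faithful and unitary.

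The key step is to show that the conjugation isomorphism $\Ad_{P_p}:\Aut(\K_a(\GG))\to\Aut(\K_x)$, $\beta\mapsto P_p\com\beta\com P_p^{-1}$, carries $\W_r(\K_a(\GG))$ onto $\W_{r_x}(\K_x)$. This is a direct computation from the definition $r=r_x\com P_p$: an automorphism $\beta$ lies in $\W_r(\K_a(\GG))$ precisely when there is a unitary $u\in\U(m)$ with $r(\beta(k))=u\,r(k)\,u^{-1}$ for all $k\in\K_a(\GG)$, and writing $\alpha=\Ad_{P_p}(\beta)$ and $k'=P_p(k)$ this condition becomes $r_x(\alpha(k'))=u\,r_x(k')\,u^{-1}$ for all $k'\in\K_x$, i.e.\ $\alpha\in\W_{r_x}(\K_x)$ with the \emph{same} implementing unitary $u$. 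Since $\Ad_{P_p}$ is an isomorphism of topological groups, it thus restricts to an isomorphism $\W_r(\K_a(\GG))\cong\W_{r_x}(\K_x)$, and therefore induces a bijection $\pi_0(\W_r(\K_a(\GG)))\cong\pi_0(\W_{r_x}(\K_x))$ sitting inside the group isomorphism $\pi_0(\Ad_{P_p})$.

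Finally I would assemble the conclusion from the Morita naturality of the monodromy map already recorded in the commutative square preceding the theorem, which reads $\pi_0(\Ad_{P_p})\com\partial=\partial\com\pi_1(P)$. Applying this to $\pi_1(\GG,a)$ and using that $\pi_1(P)$ maps into $\pi_1(G\ltimes M,x)$ together with Example~\ref{exLG} gives
\[
\pi_0(\Ad_{P_p})\bigl(\partial(\pi_1(\GG,a))\bigr)\subset\partial(\pi_1(G\ltimes M,x))\subset\pi_0(\W_{r_x}(\K_x)).
\]
Since $\pi_0(\Ad_{P_p})$ is a bijection carrying $\pi_0(\W_r(\K_a(\GG)))$ exactly onto $\pi_0(\W_{r_x}(\K_x))$, applying its inverse yields $\partial(\pi_1(\GG,a))\subset\pi_0(\W_r(\K_a(\GG)))$, as required.

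The bulk of the mathematical content is already carried by Example~\ref{exLG} (the inclusion of the monodromy image into $\pi_0(\W_{r_x}(\K_x))$ for a genuine global quotient) and by the Morita naturality of $\partial$, so the theorem is essentially a transport-of-structure statement. Accordingly, the step demanding the most care is the Weyl-group compatibility $\Ad_{P_p}(\W_r(\K_a(\GG)))=\W_{r_x}(\K_x)$: although elementary, it is where the specific choice $r=r_x\com P_p$ is used, and one must verify that a single unitary $u$ implements the conjugation on both sides so that the correspondence is a genuine bijection on connected components rather than a one-sided inclusion.
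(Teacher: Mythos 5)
Your proposal is correct and follows essentially the same route as the paper: the paper's proof consists precisely of the Morita-naturality square for $\partial$, the inclusion $\partial(\pi_1(G\ltimes M,x))\subset\pi_0(\W_{r_x}(\K_x))$ from Example~\ref{exLG}, and the (implicit) transport of the representation along the isomorphism $P_p:\K_a(\GG)\to\K_x$. Your only addition is to spell out the compatibility $\Ad_{P_p}(\W_r(\K_a(\GG)))=\W_{r_x}(\K_x)$ for $r=r_x\com P_p$, a verification the paper leaves to the reader.
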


\Rem
Observe that the condition
$\partial(\pi_1(\GG,a))\subset\pi_0(\W_{r}(\K_a(\GG)))$
is satisfied if and only if
the character of the representation $r$ is fixed under precomposition
with any automorphism representing a class in the image
of the monodromy map $\partial:\pi_1(\GG,a)\to\pi_0(\Aut(\K_a(\GG)))$.

\begin{ex}\rm
Let $G$ be a Lie group, let $M$ be a connected manifold with
basepoint $a$ and let $R:\pi_1(M,a)\to\Aut(G)$ be a homomorphism.
Choose a universal covering space $q:\tilde{M}\to M$ and
basepoint $\tilde{a}\in q^{-1}(a)$. We have a natural right action
of $\pi_1(M,a)$ on $\tilde{M}$ by deck transformations. Associated to $R$
we have a Lie groupoid $\BB$ over $M$ with space of arrows
$$ \BB_1=\tilde{M}\times_{\pi_1(M,a)}G $$
which is the space of orbits of the diagonal $\pi_1(M,a)$-action on
$\tilde{M}\times G$. With source and target map of $\BB$ both being equal
to the natural projection to $M$, the Lie groupoid $\BB$ is  in fact
a locally trivial bundle of Lie groups over $M$. The long exact sequence of
the Serre fibration $M\to\BB$ gives us the short exact sequence
$$ 1 \to
   \pi_n(M) \to
   \pi_n(\BB) \to
   \pi_{n-1}(G) \to
   1
$$
for any $n\geq 1$. In particular, the group $\pi_1(M,a)$ is
a subgroup of $\pi_1(\BB,a)$.

The Lie groupoid $\BB$ is
regular with $\I(\BB)=\K(\BB)=\BB$. Since
$\BB$ is a locally trivial bundle of Lie groups,
it follows that
$\omega:\hom(K,\BB)\to M$
is also a locally trivial bundle and hence a Serre fibration,
so $\omega:\BB^K\to\BB$ is a Serre fibration, for any Lie group $K$.
For any choice of a $K$-basegroup $\rho$ in $\BB$
we therefore have the long exact sequence
$$
\ldots\to\pi_n(\hom(K,\BB))\to\pi_n^K(\BB)
\stackrel{\pi_{n}(\omega)}{\to}\pi_n(\BB)
\stackrel{\partial}{\to}\pi_{n-1}(\hom(K,\BB))\to\ldots
$$
In special, if $G$ is compact, $K=G$ and
$\rho:G\to\BB_a$ is the natural isomorphism of Lie groups,
we get the long exact sequence 
$$
\ldots\to\pi_n(\Aut(\BB_a))\to\pi_n^G(\BB)
\stackrel{\pi_{n}(\omega)}{\to}\pi_n(\BB)
\stackrel{\partial}{\to}\pi_{n-1}(\Aut(\BB_a))\to\ldots
$$
At the end of this sequence we have the map
$\partial:\pi_1(\BB,a)\to\pi_0(\Aut(\BB_a))$. The restriction
of this homomorphism to $\pi_1(M,a)$ is exactly the composition
of $R$ with the natural quotient map
$\Aut(G)\to\pi_0(\Aut(G))\cong\pi_0(\Aut(\BB_a))$.

If the Lie groupoid $\BB$ is presentable as a global quotient,
the previous theorem implies that there exists a faithful
representation $r:G\to U(m)$ such that
$\partial(\pi_1(\BB,a))\subset \pi_0(\W_{r}(G))$.
In particular, this implies
$$ R(\pi_1(M,a))\subset \W_{r}(G)\;. $$
In other words, if we choose the homomorphism $R$ so that
this condition is not satisfied for any
faithful representation $r$, then $\BB$ is not presentable.
Finding out if such $R$ exists is not too difficult when we know the group $G$
well enough, \textit{e.g.}\ if we explicitly know all of its irreducible
characters and all characters of its faithful representations.
For example, it is easy to find such $R$ for the case
when $G$ is the $n$-torus, $n\geq 2$ (this example
of non-presentable proper Lie groupoid is known for $n=2$, 
see \textit{e.g.}\ \cite{LuckOliver,Trentinaglia}).
\end{ex}

\end{document}